\newtheorem{theorem}{Theorem}[section]
\newtheorem{lemma}{Lemma}[section]
\title{\Large On the spectral radii of bicyclic graphs with fixed independence number}
\author{{ \small {Xi-Ying Yuan\thanks{xiyingyuan2007@hotmail.com (X.Y. Yuan).  } }
 \quad }\\
\parbox[c]{10cm}{\vspace{0.5cm}\centering \small{ Department of Mathematics, Shanghai University, Shanghai, 200444,
China}\\  } }
\date{}
\begin{document}
\maketitle
 \maketitle
 \noindent{\bf \it Abstract}

 {Bicyclic graph is a connected graph in which the number
of edges equals the number of vertices plus one.  In this paper, we
determine the graph which alone maximizes the  spectral radii among
all the bicyclic graphs on $n$ vertices with fixed independence
number.}

\medskip
 \noindent {\bf \it AMS classification:} 05C50
\medskip

 \noindent  {\bf \it Keywords}: {\small  Bicyclic graph;  Independence number;
Spectral radius}

\section{ Introduction}
Let $G$ be a  simple graph. Denote by $N_{G}(v)$ (or simply $N(v)$)
the set of all the neighbors of a vertex $v$ in $G$,  and by
$d_{G}(v)$ (or  $d(v)$) the degree of $v$.  Let $A(G)$ be the
adjacency matrix of $G$ and $\Phi(G;x)$ be the characteristic
polynomial \mbox{det}$(xI-A(G))$. Since $A(G)$ is a real symmetric
matrix, all of its eigenvalues are real. The largest  eigenvalue of
$A(G)$ is called the spectral radius of $G$, denoted by $\rho(G)$.
When $G$ is connected, $A(G)$ is an irreducible matrix. And by the
Perron-Frobenius Theorem $\rho(G)$ has multiplicity one and there
exists a unique unit positive eigenvector corresponding to
$\rho(G)$.  We shall refer to such an eigenvector as the Perron
vector of $G$. Let $x$  be the Perron vector of a connected graph
$G$, and we always use $x_{u}$ to denote
 the coordinate of $x$ corresponding to the vertex $u$ of $G$.

\medskip
Brualdi and Solheid \cite{Brualdi} proposed the following general
problem, which became one of the classic problems of  spectral graph
theory:

 {\it Given a set  of graphs, find an upper
bound for the spectral radius  and characterize the graphs in which
the maximal spectral radius is attained.}

A subset $S$ of $V(G)$ is called an independent set of $G$ if no two
vertices in $S$ are adjacent in $G$. The independence number of $G$,
denoted by $\alpha(G)$, is the size of a maximum independent set of
$G$. We use the notations in  \cite{West}. Denote by
$\alpha^{\prime}(G)$ the edge independence number(or matching
number), by $\beta(G)$  the vertex covering number, and
$\beta^{\prime}(G)$ the  edge covering number for graph $G$.   For a
tree $T$ on $n$ vertices $\alpha(T)=n-\alpha^{\prime}(T)$ (see
Lemmas \ref{relation1},  \ref{relation2}). In \cite{Hou} the tree
with the maximal spectral radius among all the trees on $n$ vertices
with fixed matching number was determined. Thus the tree with the
maximal spectral radius among all the  trees on $n$ vertices with
fixed independence number was also determined. In \cite{Feng} the
graph with the maximal spectral radius among all the unicyclic
graphs on $n$ vertices with fixed independence number was
determined.

Here we are interested in finding the graph with the maximal
spectral radius among all the bicyclic graphs on $n$ vertices with
fixed independence number. We mainly  prove the following results.

 \begin{theorem}
Let $F(n,\frac{n-2}{2})$ and $M(n,\alpha)$ be the graphs as shown in
Fig. 2 and Fig. 3. For a   bicyclic graph $G$  on $n\,(n\geq10)$
vertices then

\medskip

 (1).  $\alpha(G)\geq\frac{n-2}{2}$;

\medskip
 (2).  if $\alpha(G)=\frac{n-2}{2}$, then
 $\rho(G)\leq\rho(F(n,\frac{n-2}{2}))$, where  $\rho(F(n,\frac{n-2}{2}))$ is the largest root
 of the equation $$x^{4}-2x^{3}-(n/2+1)x^2+nx+3=0,$$ and  equality holds if and only if
$G=F(n,\frac{n-2}{2})$;

\medskip
 (3). if $\alpha(G)\geq \frac{n-1}{2}$, then
 $\rho(G)\leq\rho(M(n,\alpha))$, where  $\rho(M(n,\alpha))$ is the largest root
 of the equation $$x^4-(\alpha+3)x^2-4x+(2\alpha-n+1)=0,$$ and  equality holds if and only if
$G=M(n,\alpha)$.
 \end{theorem}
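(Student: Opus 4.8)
\medskip
\noindent\textit{Proof strategy.}\
Part (1) is combinatorial. By the Gallai identity $\alpha(G)=n-\beta(G)$ (Lemmas~\ref{relation1}, \ref{relation2}), it suffices to prove $\beta(G)\le\lfloor n/2\rfloor+1$. Deleting one edge from each of the two cycles of $G$, chosen so that connectedness is preserved, leaves a spanning tree $T$; since $T$ is bipartite, K\"onig's theorem gives $\beta(T)=\alpha'(T)$, and covering the two deleted edges costs at most two more vertices, so $\beta(G)\le\alpha'(T)+2$. If $\alpha'(T)\le\lfloor n/2\rfloor-1$ this already yields the bound. In the remaining case $T$ has a (near-)perfect matching, and one shows directly --- by comparing maximum independent sets of $T$ and of $G$, or through the Gallai--Edmonds decomposition of $G$ --- that restoring the two edges to $T$ lowers the independence number by at most one, so $\beta(G)=n-\alpha(G)\le\lfloor n/2\rfloor+1$ again. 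Hence $\alpha(G)\ge\lceil n/2\rceil-1\ge\frac{n-2}{2}$, with equality forcing $n$ even; the graph $F(n,\frac{n-2}{2})$ of Fig.~2 attains it.

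For parts (2) and (3), fix $\alpha$ and let $G$ be a bicyclic graph on $n$ vertices with $\alpha(G)=\alpha$ that maximizes $\rho:=\rho(G)$; let $x$ be its Perron vector. The basic tool is that for any bicyclic $G'$ on $n$ vertices with $\alpha(G')=\alpha$ one has $\rho(G')\ge x^{\top}A(G')x/(x^{\top}x)$, with strict inequality as soon as $x^{\top}A(G')x>x^{\top}A(G)x$; by maximality of $\rho$, no $\alpha$-preserving modification of $G$ may strictly increase $x^{\top}A(G')x$. Running the usual repertoire of such modifications --- the edge-relocation lemma (shift an edge $vw$ with $v\neq u$ toward a vertex $u$ of maximum Perron weight), straightening of pendant trees into pendant paths, shortening of pendant paths of length $\ge 3$ at the cost of extra pendant edges at $u$ (the operations underlying the tree case of \cite{Hou} and the unicyclic case of \cite{Feng}), and contraction of the cyclic base of $G$ (the internal-path subdivision lemma says long internal paths lower $\rho$), each step arranged so as to keep $\alpha(G)=\alpha$ --- drives $G$ into a canonical form: a small bicyclic base (one of $K_{4}-e$, the two-triangles-sharing-a-vertex graph, $\theta(2,2,2)=K_{2,3}$, and a few relatives), the choice of which is dictated by the value of $\alpha$, together with some pendant edges and pendant paths of length $2$ attached at one or two prescribed vertices.

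After these reductions $G$ lies in a finite list of explicit one- or two-parameter families, in which the number $t$ of pendant edges and the number $s$ of pendant paths of length $2$ are determined by $n$ and $\alpha$ (for instance $t=2\alpha-n$ and $s=n-\alpha-2$ once $\alpha\ge n/2$). For each family $\Phi(G;x)$ is computed either by iterating the pendant-vertex recursion $\Phi(G;x)=x\,\Phi(G-v;x)-\Phi(G-v-w;x)$ (with $v$ a pendant vertex and $w$ its neighbor) or, equivalently, as $\det(xI-Q)$ for the quotient matrix $Q$ of the natural equitable partition, and $\rho(G)$ is the largest root of a quartic in $x$ whose coefficients involve $n$ and $\alpha$. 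Comparing these quartics --- this is where the hypothesis $n\ge 10$ enters, small $n$ being exceptional --- shows that the maximum of $\rho$ over the admissible graphs is attained uniquely by $F(n,\frac{n-2}{2})$ when $\alpha=\frac{n-2}{2}$ and by $M(n,\alpha)$ when $\alpha\ge\frac{n-1}{2}$; the two displayed equations $x^{4}-2x^{3}-(n/2+1)x^{2}+nx+3=0$ and $x^{4}-(\alpha+3)x^{2}-4x+(2\alpha-n+1)=0$ are exactly $\det(xI-Q)=0$ for the quotient matrices of these two graphs. Uniqueness of the extremal graph follows from the strictness in the modification steps.

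The main difficulty is carrying out the modification steps under the \emph{exact} constraint $\alpha(G)=\alpha$: a switch favourable for $\rho$ almost always perturbs the independence number, so each one must be coupled with a compensating adjustment elsewhere (trading a pendant edge for a pendant path of length $2$, or conversely), and one must verify that the net change in $x^{\top}A(G')x$ is still strictly positive unless $G$ is already canonical. The borderline case $\alpha=\frac{n-2}{2}$ of part (2) is the most delicate: here $\alpha$ is pinned to its minimum value from part (1), so essentially every vertex outside the base must lie on a pendant path of length $2$ and there is almost no freedom to rearrange --- which is why the extremal graph $F(n,\frac{n-2}{2})$ is not merely ``a base with pendant edges'' (its characteristic equation genuinely carries the cubic term $-2x^{3}$) --- and the few remaining competing configurations have to be compared one against another directly.
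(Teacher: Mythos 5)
Your outline assembles the right toolkit (Perron-vector edge shifting, grafting, characteristic-polynomial comparison), but at the two points where the actual work lies it only asserts what has to be proved. For part (1), your spanning-tree route gives $\beta(G)\le\alpha'(T)+2$, i.e.\ only $\alpha(G)\ge\frac{n-4}{2}$ in the critical case, and your rescue --- that when $T$ has a (near-)perfect matching, restoring the two deleted edges lowers the independence number by at most one --- is precisely the content of part (1) restated; adding an edge can drop $\alpha$ by one, so two edges can a priori drop it by two, and no argument (Gallai--Edmonds or otherwise) is actually given. The paper's proof of Lemma \ref{bound for elfa} is both simpler and sharper: delete one vertex from each cycle of the base, so the remainder is a forest, hence bipartite, giving $\alpha(G)\ge\alpha(G-u-v)\ge\frac{n-2}{2}$; and, crucially, it characterizes equality ($\widehat{G}=B(p,\ell,q)$ with $p,q$ odd, $\ell\ge2$, and $G-V_{c}(G)$ having a perfect matching). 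That structural characterization, absent from your proposal, is what makes part (2) tractable, because it tells you exactly which modifications stay inside ${\cal B}(n,\frac{n-2}{2})$.

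For parts (2) and (3) you describe the plan (``run the usual repertoire of $\alpha$-preserving modifications, reduce to a finite list, compare quartics'') and you yourself identify the main difficulty --- keeping $\alpha(G)=\alpha$ exactly through each switch --- but you never resolve it: no canonical list is derived, no compensating adjustments are specified, no polynomial comparison is carried out, and even the parameters of the extremal graph are off by one ($M(n,\alpha)$ has $2\alpha-n+1$ pendant edges and $n-\alpha-3$ pendant paths of length $2$, not $2\alpha-n$ and $n-\alpha-2$). The paper's execution is also organized differently in a way that sidesteps part of the difficulty you flag: for part (3) it partitions ${\cal B}(n,\alpha)$ by the number $k$ of pendant vertices; for $k\le\alpha-2$ it does not preserve $\alpha$ at all, instead invoking Guo's result (Lemma \ref{extremal graph by guo}) on bicyclic graphs with $k$ pendant vertices plus grafting monotonicity to get $\rho(G)\le\rho(B^{\sharp}(\alpha-2))=\rho(M(n,\alpha))$; for $k=\alpha$ and $k=\alpha-1$ it exploits the structural consequences (every non-pendant vertex has a pendant neighbour, resp.\ $1\le|V^{\prime}(G)|\le3$) to reduce to the explicit candidates $M_{1},\dots,M_{6}$, each beaten by a concrete characteristic-polynomial estimate; and part (2) runs the Lemma \ref{deprive l} switches while checking the perfect-matching condition so as to remain in ${\cal B}(n,\frac{n-2}{2})$, ending with the single comparison $\rho(F^{\prime})<\rho(F(n,\frac{n-2}{2}))$. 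As it stands your text is a plausible roadmap in the same spirit as the paper, but the substantive steps --- the equality characterization behind (1)/(2), the constraint-respecting (or constraint-circumventing) reductions, and the explicit comparisons --- are missing, so it does not yet constitute a proof.
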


\section{Preliminaries}

 The the following two lemmas are the main tools for some proofs in  later sections.

 \begin{lemma}
  \cite{wu}
  \label{deprive l}
  Let $u, v$ be two vertices of a connected graph $G$.
Suppose $v_{1}, v_{2},\cdots, v_{s} \,(1 \leq s \leq d(v)) $ are
some vertices in $N(v)\backslash (N(u)\bigcup\{u\}) $. Let $x$  be
the  Perron vector  of $G$. If $x_{u} \geq x_{v}$, let $G^{*}$ be
the graph obtained from $G$ by deleting the edges $v v_{1}, v v_{2},
\cdots, v v_{s}$ and adding the edges $u v_{1}, u v_{2}, \cdots, u
v_{s}$, then we have $\rho(G^{*})> \rho(G)$.
\end{lemma}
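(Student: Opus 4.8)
\medskip
\noindent The plan is to use the Rayleigh-quotient (Courant--Fischer) characterisation of the largest eigenvalue. Recall that for a real symmetric matrix $M$ one has $\rho(M)=\max_{y\neq 0}(y^{\top}My)/(y^{\top}y)$, and that the maximum is attained at a vector $y$ only if $y$ is an eigenvector of $M$ belonging to $\rho(M)$. Since $x$ is the unit positive Perron vector of the connected graph $G$, we have $x^{\top}x=1$, $A(G)x=\rho(G)x$, $x^{\top}A(G)x=\rho(G)$, and $x_{w}>0$ for every vertex $w$; in particular $x_{v_{1}},\dots,x_{v_{s}}>0$.

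First I would check that $G^{*}$ is a genuine graph transformation: because each $v_{i}$ lies in $N(v)\setminus(N(u)\cup\{u\})$ we have $v_{i}\neq u$ and $uv_{i}\notin E(G)$, so the added edges are new, whereas $v_{i}\in N(v)$ says the deleted edges really belong to $G$; note also that $u\neq v$, since otherwise $N(v)\setminus(N(u)\cup\{u\})=\emptyset$, contradicting $s\geq 1$. Using $y^{\top}A(H)y=2\sum_{pq\in E(H)}y_{p}y_{q}$ and $E(G^{*})=\bigl(E(G)\setminus\{vv_{1},\dots,vv_{s}\}\bigr)\cup\{uv_{1},\dots,uv_{s}\}$, a one-line computation gives
$$x^{\top}A(G^{*})x-x^{\top}A(G)x=2(x_{u}-x_{v})\sum_{i=1}^{s}x_{v_{i}}\geq 0,$$
since $x_{u}\geq x_{v}$ and every $x_{v_{i}}>0$. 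Hence $\rho(G^{*})\geq x^{\top}A(G^{*})x/x^{\top}x\geq\rho(G)$.

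It then remains to promote this to a strict inequality. If $x_{u}>x_{v}$ the displayed quantity is already strictly positive, so $\rho(G^{*})>\rho(G)$ at once. Assume therefore $x_{u}=x_{v}$ and, seeking a contradiction, $\rho(G^{*})=\rho(G)$. Then $x$ attains the maximum of the Rayleigh quotient of $A(G^{*})$, hence $x$ is an eigenvector of $A(G^{*})$ for $\rho(G^{*})$, that is, $A(G^{*})x=\rho(G)x$. Subtracting $A(G)x=\rho(G)x$ gives $\bigl(A(G^{*})-A(G)\bigr)x=0$. But inspecting the difference matrix, its $u$-th row has entry $+1$ in each column $v_{1},\dots,v_{s}$ and $0$ elsewhere, so the $u$-th coordinate of $\bigl(A(G^{*})-A(G)\bigr)x$ equals $\sum_{i=1}^{s}x_{v_{i}}>0$ --- a contradiction.

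The step needing the most care is this last one: it is tempting to invoke the Perron vector of $G^{*}$, but $G^{*}$ need not be connected (if $s=d(v)$ then $v$ becomes isolated), so Perron--Frobenius is not available for $G^{*}$; the purely symmetric-matrix statement ``equality in the Rayleigh bound forces an eigenvector'' sidesteps this. Everything else is routine bookkeeping about which entries of $A(G^{*})-A(G)$ are nonzero, so I anticipate no serious obstacle.
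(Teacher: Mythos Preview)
Your proof is correct. Note, however, that the paper does not actually prove this lemma: it is simply quoted from \cite{wu} as a known tool, so there is no ``paper's own proof'' to compare against. Your Rayleigh-quotient argument is precisely the standard proof of this edge-shifting lemma, and your care in handling the equality case via the variational characterisation (rather than appealing to a Perron vector of $G^{*}$, which may fail to be connected) is the right way to secure strict inequality.
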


\begin{lemma}
\cite{Li} \label{grafting}
 Let $v$ be a vertex in a non-trivial
connected graph $G$ and suppose that two paths of lengths $k,m(k
\geq m \geq 1)$ are attached to $G$ by their end vertices at $v$ to
form $G_{k,m}$. Then $\rho(G_{k,m})
> \rho(G_{k+1,m-1})$.
\end{lemma}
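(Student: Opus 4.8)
The plan is to compare the two graphs through their characteristic polynomials rather than through Perron vectors. A Perron-vector argument is tempting (relocate the endpoint of the short path onto the end of the long path and read off the change in the Rayleigh quotient), but it secretly needs the Perron entries to decrease monotonically along a pendant path, and that fails once $\rho<2$ — which genuinely occurs here (for instance $G=K_2$ with $k=m=1$ gives $G_{k,m}=K_{1,3}$ with $\rho=\sqrt3<2$). Working with $\Phi(\cdot;x)$ avoids any case split on the size of $\rho$.

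First I would set up a transfer identity for attaching a pendant path. Write $p_\ell(x)=\Phi(P_\ell;x)$ for the path on $\ell$ vertices, so that $p_0=1$, $p_{-1}=0$, and $p_\ell=xp_{\ell-1}-p_{\ell-2}$. Repeated use of the pendant-vertex expansion $\Phi(H;x)=x\Phi(H-u;x)-\Phi(H-u-w;x)$ (for a pendant $u$ with neighbour $w$) gives, for $G$ carrying a single pendant path of length $\ell$ at $v$, the formula $\Phi=\Phi(G;x)\,p_\ell-\Phi(G-v;x)\,p_{\ell-1}$. Applying this twice — attach the $m$-path, then the $k$-path, using that deleting $v$ detaches the first path as a clean $P_m$ — yields
\[
\Phi(G_{k,m};x)=\Phi(G;x)\,p_kp_m-\Phi(G-v;x)\,(p_{k-1}p_m+p_kp_{m-1}).
\]
I would sanity-check this on $G=K_2$, $k=m=1$, where it must reproduce $\Phi(K_{1,3};x)=x^4-3x^2$.

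The heart of the argument is a clean factorization of the difference. Subtracting the analogous expression for $G_{k+1,m-1}$ and regrouping, the $\Phi(G)$-coefficient is $p_kp_m-p_{k+1}p_{m-1}$ and the $\Phi(G-v)$-coefficient is $p_{k-1}p_m-p_{k+1}p_{m-2}$. Using the Chebyshev realization $p_\ell=\sin((\ell+1)\theta)/\sin\theta$ with $x=2\cos\theta$, product-to-sum identities give the two key relations $p_kp_m-p_{k+1}p_{m-1}=p_{k-m}$ and $p_{k-1}p_m-p_{k+1}p_{m-2}=x\,p_{k-m}$. Hence
\[
\Phi(G_{k,m};x)-\Phi(G_{k+1,m-1};x)=p_{k-m}(x)\,\bigl[\Phi(G;x)-x\,\Phi(G-v;x)\bigr].
\]
Establishing these two polynomial identities cleanly is the main obstacle; everything downstream is a sign count.

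Finally I would evaluate at $\rho_2:=\rho(G_{k+1,m-1})$, where the right graph's polynomial vanishes, so $\Phi(G_{k,m};\rho_2)=p_{k-m}(\rho_2)\,[\Phi(G;\rho_2)-\rho_2\Phi(G-v;\rho_2)]$. Since $k\ge m\ge1$ forces a pendant path of length $k+1\ge2$, the graphs $G$, $G-v$ and $P_{k-m}$ are all proper subgraphs of the connected graph $G_{k+1,m-1}$, so each has spectral radius strictly below $\rho_2$; thus $p_{k-m}(\rho_2)>0$, $\Phi(G;\rho_2)>0$ and $\Phi(G-v;\rho_2)>0$. By Schwenk's formula $\Phi(G;x)-x\Phi(G-v;x)=-\sum_{u\sim v}\Phi(G-u-v;x)-2\sum_{C\ni v}\Phi(G-C;x)$, and at $x=\rho_2$ every term on the right is a characteristic polynomial of a proper subgraph evaluated above its spectral radius, hence positive; as $v$ has at least one neighbour the bracket is strictly negative. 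Therefore $\Phi(G_{k,m};\rho_2)<0$, while $\Phi(G_{k,m};x)\to+\infty$ as $x\to\infty$, so $G_{k,m}$ has an eigenvalue exceeding $\rho_2$, i.e.\ $\rho(G_{k,m})>\rho(G_{k+1,m-1})$, the inequality being strict precisely because that bracket is strictly negative.
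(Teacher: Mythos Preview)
The paper does not prove this lemma; it is quoted from Li--Feng without argument. Your proof is correct and is essentially the classical characteristic-polynomial route to the grafting lemma: the factorization
\[
\Phi(G_{k,m};x)-\Phi(G_{k+1,m-1};x)=p_{k-m}(x)\bigl(\Phi(G;x)-x\,\Phi(G-v;x)\bigr)
\]
together with Schwenk's expansion (the paper's Lemma~2.6) evaluated at $x=\rho(G_{k+1,m-1})$ yields the strict sign. The subgraph comparisons you use are all valid: $k+m\ge 2$ makes $G$ proper in $G_{k+1,m-1}$, and the pendant $(k{+}1)$-path contains $P_{k-m}$ properly since $m\ge 1$. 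One cosmetic remark: the positivity of $\Phi(G;\rho_2)$ and $\Phi(G-v;\rho_2)$ that you record is never actually used; what drives the sign of the bracket is the positivity of each $\Phi(G-u-v;\rho_2)$ and $\Phi(G-V(Z);\rho_2)$ appearing in Schwenk's formula, and those you invoke correctly.
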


\begin {lemma}
\cite{Gutman} \label{bound} For any simple graph  $G$  we have
$\rho(G)\geq \sqrt{\Delta(G)}$ holds.
\end{lemma}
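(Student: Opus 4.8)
The plan is to prove the inequality directly from the variational (Rayleigh quotient) characterisation of the largest eigenvalue. Let $v$ be a vertex of $G$ attaining the maximum degree, so that $d(v)=\Delta(G)=:\Delta$, and let $u_{1},\dots,u_{\Delta}$ denote its neighbours. Since $\rho(G)$ is the largest eigenvalue of the real symmetric matrix $A(G)$, for every nonzero real vector $y$ we have $\rho(G)\ge \frac{y^{T}A(G)y}{y^{T}y}$, and it therefore suffices to exhibit a single test vector $y$ for which this ratio is at least $\sqrt{\Delta}$.

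First I would choose the test vector $y$ supported on $v$ and its neighbours: set $y_{v}=\sqrt{\Delta}$, set $y_{u_{i}}=1$ for $i=1,\dots,\Delta$, and $y_{w}=0$ for every other vertex $w$. Then $y^{T}y=\Delta+\Delta=2\Delta$. For the numerator I would use $y^{T}A(G)y=2\sum_{\{a,b\}\in E(G)}y_{a}y_{b}$; the only edges whose two endpoints both carry a nonzero coordinate are the $\Delta$ edges $vu_{i}$ (each contributing $y_{v}y_{u_{i}}=\sqrt{\Delta}$) together with possibly some edges among the $u_{i}$ (each contributing $y_{u_{i}}y_{u_{j}}=1\ge 0$). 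Hence $y^{T}A(G)y\ge 2\Delta\sqrt{\Delta}$, and the Rayleigh ratio is at least $\frac{2\Delta\sqrt{\Delta}}{2\Delta}=\sqrt{\Delta}$, which gives $\rho(G)\ge\sqrt{\Delta}$.

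As a cross-check, and an alternative route, one could instead argue by monotonicity under subgraphs: the vertex $v$ and its incident edges $vu_{1},\dots,vu_{\Delta}$ span a copy of the star $K_{1,\Delta}$ inside $G$, whose adjacency spectrum is well known to be $\{\pm\sqrt{\Delta},\,0^{(\Delta-1)}\}$, so $\rho(K_{1,\Delta})=\sqrt{\Delta}$; since $A(K_{1,\Delta})\le A(G)$ entrywise among nonnegative matrices, the Perron--Frobenius monotonicity principle yields $\rho(G)\ge \rho(K_{1,\Delta})=\sqrt{\Delta}$. A third, purely algebraic phrasing is to note that the diagonal entry $(A(G)^{2})_{vv}$ equals $d(v)=\Delta$, while the largest eigenvalue of the symmetric nonnegative matrix $A(G)^{2}$ equals $\rho(G)^{2}$ and dominates any diagonal entry (again by Rayleigh, now with a standard basis vector), giving $\rho(G)^{2}\ge\Delta$.

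There is no serious obstacle here: the statement is elementary and each route is essentially one short computation. The only points demanding a line of care are that the test vector be normalised so the weight $\sqrt{\Delta}$ on $v$ balances the unit weights on the neighbours, and that the possible edges among the neighbours contribute a nonnegative amount, so that ignoring them only weakens the numerator in the favourable direction; neither affects the bound, and the argument requires no connectivity assumption on $G$. I would present the Rayleigh-quotient computation as the main proof, since it is self-contained and needs no appeal to subgraph monotonicity.
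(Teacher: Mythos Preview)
Your argument is correct. The paper itself does not supply a proof of this lemma; it simply quotes the inequality from the literature (Cvetkovi\'c--Gutman), so there is no in-paper proof to compare against. Your Rayleigh-quotient computation with the test vector supported on a vertex of maximum degree and its neighbours is valid, and the two alternative routes you sketch (star-subgraph monotonicity and the diagonal entry of $A(G)^{2}$) are equally sound; the only trivial edge case to mention explicitly is $\Delta(G)=0$, where the test vector degenerates but the inequality $\rho(G)=0\ge 0$ is immediate.
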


\begin{lemma}
\label{relation1}
 \cite{}
  Let $G$ be a graph on $n$ vertices without
isolated vertices. Then
$$\alpha(G)+\beta(G)=\alpha^{\prime}(G)+\beta^{\prime}(G)=n.$$
\end{lemma}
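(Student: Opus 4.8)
The plan is to establish the two Gallai-type identities separately, each by a pair of opposite inequalities. For the first identity $\alpha(G)+\beta(G)=n$, I would use the elementary observation that a set $S\subseteq V(G)$ is independent if and only if its complement $V(G)\setminus S$ is a vertex cover: an edge fails to be covered by $V(G)\setminus S$ precisely when both of its endpoints lie in $S$. Thus complementation is a size-reversing bijection between independent sets and vertex covers, so a maximum independent set of size $\alpha(G)$ corresponds to a minimum vertex cover of size $n-\alpha(G)$, giving $\beta(G)=n-\alpha(G)$. (For this half the hypothesis that $G$ has no isolated vertices is not needed.)

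For the second identity $\alpha'(G)+\beta'(G)=n$, I would prove $\beta'(G)\le n-\alpha'(G)$ and $\alpha'(G)\ge n-\beta'(G)$. For the first bound, take a maximum matching $M$ with $|M|=\alpha'(G)$; it saturates $2\alpha'(G)$ vertices and leaves $n-2\alpha'(G)$ vertices unsaturated, none of which is isolated (this is where the hypothesis enters), so we may pick one edge incident to each unsaturated vertex and adjoin these to $M$ to obtain an edge cover of size at most $\alpha'(G)+(n-2\alpha'(G))=n-\alpha'(G)$. For the reverse bound, take a minimum edge cover $C$ with $|C|=\beta'(G)$ and let $H=(V(G),C)$ be the spanning subgraph it induces. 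I would first argue that no component of $H$ contains a path with three edges and no component contains a cycle, since in either case deleting a suitable interior (resp. cyclic) edge would leave an edge cover, contradicting minimality; hence every component of $H$ is a star. If $H$ has $c$ components with $k_1,\dots,k_c$ edges respectively, then $\sum_i k_i=\beta'(G)$ while $\sum_i (k_i+1)=n$, so $c=n-\beta'(G)$. Choosing one edge from each star (components are vertex-disjoint, so the chosen edges are pairwise non-adjacent) yields a matching of size $c=n-\beta'(G)$, whence $\alpha'(G)\ge n-\beta'(G)$. Combining the two bounds gives $\alpha'(G)+\beta'(G)=n$.

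The routine content is the inequalities and the counting; the one step needing genuine care is the structural claim that a minimum edge cover is a disjoint union of stars, and keeping precise track of where the "no isolated vertices" hypothesis is used (it makes $\beta'(G)$ well defined and guarantees the unsaturated vertices in the first construction have an incident edge to select). I expect that structural claim to be the main point to get right; everything else reduces to bookkeeping.
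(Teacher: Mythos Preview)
Your argument is correct and is essentially the standard textbook proof of the Gallai identities (as found, for instance, in West's \emph{Introduction to Graph Theory}). Note, however, that the paper does not supply its own proof of this lemma: it is quoted from the literature (the empty \verb|\cite{}| indicates a reference, most likely \cite{West}, was intended), so there is no in-paper proof to compare against. Your write-up would serve perfectly well as a self-contained justification; the structural claim that a minimum edge cover decomposes into vertex-disjoint stars is handled correctly, and your bookkeeping on where the ``no isolated vertices'' hypothesis enters is accurate.
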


\begin{lemma}
\label{relation2}
 \cite{}
  Let $G$ be a bipartite graph  without
isolated vertices. Then $\alpha(G)=\beta^{\prime}(G).$
\end{lemma}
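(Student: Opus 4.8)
The plan is to reduce the identity to König's theorem by playing it off against the two Gallai relations already recorded in Lemma \ref{relation1}. The hypothesis that $G$ has no isolated vertices is precisely what makes the edge-cover number $\beta^{\prime}(G)$ well defined and guarantees the second Gallai identity, so both relations of Lemma \ref{relation1} are available: $\alpha(G)+\beta(G)=n$ and $\alpha^{\prime}(G)+\beta^{\prime}(G)=n$. Rewriting these as $\alpha(G)=n-\beta(G)$ and $\beta^{\prime}(G)=n-\alpha^{\prime}(G)$ and subtracting, I obtain
$$\alpha(G)-\beta^{\prime}(G)=\alpha^{\prime}(G)-\beta(G).$$
Hence the target equality $\alpha(G)=\beta^{\prime}(G)$ is \emph{equivalent} to the König-type equality $\alpha^{\prime}(G)=\beta(G)$, i.e. to the assertion that the maximum matching number equals the minimum vertex-cover number. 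This reformulation is the whole point: it converts a statement about independent sets and edge covers into the single classical fact about matchings and vertex covers.

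The remaining, and only substantive, step is therefore König's theorem for the bipartite graph $G$. One inequality, $\alpha^{\prime}(G)\le\beta(G)$, holds for every graph, since any vertex cover must meet each edge of a fixed matching and distinct matching edges force distinct cover vertices. For the reverse inequality I would run the standard $M$-alternating-path construction: let $X,Y$ be the two parts of the bipartition, let $M$ be a maximum matching, let $U\subseteq X$ be the set of $M$-unsaturated vertices of $X$, and let $Z$ be the set of all vertices reachable from $U$ along $M$-alternating paths. The set $K=(X\setminus Z)\cup(Y\cap Z)$ is then a vertex cover; and because $M$ is maximum there is no augmenting path out of $U$, which forces every vertex of $K$ to be $M$-saturated while no edge of $M$ has both endpoints in $K$. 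Consequently $|K|=|M|$, giving $\beta(G)\le|K|=\alpha^{\prime}(G)$.

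Combining the two inequalities yields $\alpha^{\prime}(G)=\beta(G)$, and substituting this into the displayed identity gives $\alpha(G)=\beta^{\prime}(G)$, as required. The main obstacle is the verification that the constructed $K$ is genuinely a vertex cover: one must rule out an uncovered edge $xy$ with $x\in Z$ and $y\notin Z$, and this is exactly where bipartiteness enters, since it prevents an edge from joining two vertices on the same side and thereby forces $y\in Z$ (a contradiction) in each case. If one is instead content to invoke König's theorem as a known result—consistent with the citation style of Lemma \ref{relation1}—then the lemma follows in two lines directly from the two Gallai identities above.
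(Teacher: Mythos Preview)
Your argument is correct: the two Gallai identities from Lemma~\ref{relation1} reduce the claim to K\"onig's theorem $\alpha'(G)=\beta(G)$ for bipartite graphs, and your sketch of the alternating-path proof of K\"onig is the standard one and is fine. Note, however, that the paper does not prove Lemma~\ref{relation2} at all: it is stated with a bare citation (to West's textbook, in line with Lemma~\ref{relation1}) and used as a black box, so there is no proof in the paper to compare your approach against. Your write-up simply supplies what the paper chose to quote.
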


\begin {lemma}
\cite{Cve} \label{formular1} Let $v$ be a vertex of $G$, and ${\cal
C}(v)$
 be the set of all cycles containing $v$. Then
$$\Phi(G;x)=x\Phi(G-v;x)-\sum_{u \in N(v)}\Phi(G-u-v;x)-2\sum_{Z \in {\cal C}(v)}\Phi(G- V(Z);x).$$
\end{lemma}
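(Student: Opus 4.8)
The plan is to prove the identity by a direct expansion of the determinant $\Phi(G;x)=\det(xI-A(G))$ over permutations, classifying the contributing permutations according to how they act on the chosen vertex $v$. This is the computation underlying the coefficient (Sachs) theorem, and the formula is essentially Schwenk's vertex expansion; I would derive it from scratch rather than quote it.

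First I would write
$$\Phi(G;x)=\sum_{\pi\in S_{V}}\mathrm{sgn}(\pi)\prod_{w\in V}(xI-A(G))_{w,\pi(w)}.$$
Since $G$ is simple, the diagonal entries of $xI-A$ equal $x$, and the off-diagonal entry at position $(w,w')$ equals $-1$ when $ww'\in E(G)$ and $0$ otherwise. Hence a permutation $\pi$ contributes a nonzero term only if every nontrivial cycle of $\pi$ traces a closed walk along edges of $G$; equivalently, the moved points of $\pi$ are partitioned into vertex-disjoint directed cycles, each supported on an edge (length $2$) or on a genuine cycle of $G$ (length $\geq 3$), while each of the $n-|S|$ fixed points contributes a factor $x$, where $S$ is the set of moved vertices. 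This rewrites the sum as a weighted sum over vertex-disjoint systems of directed cycles in $G$.

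Next I would split this sum into three cases according to the cycle of $\pi$ through $v$. (i) If $v$ is a fixed point, factoring out its $x$ and summing the residual permutations over $V\setminus\{v\}$ reproduces exactly $x\,\Phi(G-v;x)$. (ii) If $v$ lies in a transposition $(u\,v)$ with $u\in N(v)$, that $2$-cycle is its own reverse, has sign $-1$, and contributes entry-product $(-1)(-1)=1$; factoring it out leaves $\Phi(G-u-v;x)$, so this case totals $-\sum_{u\in N(v)}\Phi(G-u-v;x)$. (iii) If $v$ lies in a directed cycle of length $k\geq3$, that cycle has sign $(-1)^{k-1}$ and entry-product $(-1)^{k}$, whose product is $-1$ independent of $k$; moreover each undirected cycle $Z\in{\cal C}(v)$ gives rise to exactly two such directed cycles (its two orientations), so summing the residual permutations over $V\setminus V(Z)$ yields $-2\sum_{Z\in{\cal C}(v)}\Phi(G-V(Z);x)$. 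Adding the three contributions gives the asserted formula.

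The main obstacle is the sign and orientation bookkeeping in case (iii): one must check that the $k$-cycle sign $(-1)^{k-1}$ and the factor $(-1)^{k}$ from the $k$ off-diagonal entries combine to the constant $-1$ for every $k$, and that precisely two distinct permutations correspond to each undirected cycle through $v$, which is what produces the coefficient $2$ rather than $1$. The contrast with case (ii)—where a transposition equals its own reverse and so enters with coefficient $1$—is exactly why the neighbor sum and the cycle sum appear with different multiplicities, and settling this distinction is the only delicate point; the fixed-point case (i) and the reduction of the residual sums to characteristic polynomials of vertex-deleted subgraphs are routine.
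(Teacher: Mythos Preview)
Your argument is correct: the Leibniz expansion of $\det(xI-A(G))$ together with the three-way split according to the cycle of $\pi$ through $v$ is exactly the standard derivation of this identity (often attributed to Schwenk), and your sign bookkeeping in cases (ii) and (iii) is accurate.

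There is nothing to compare against, however: the paper does not prove this lemma at all. It is quoted from \cite{Cve} as a known result and used as a black box for the characteristic-polynomial computations in Sections~3 and~4. Your derivation is precisely the argument one finds in that reference, so in effect you have supplied what the paper merely cites.
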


Let $C_{p}$ and $C_{q}$ be two vertex-disjoint cycles. Suppose that
$v_{1}$
 is a vertex of $C_{p}$ and $v_{\ell}$ is a
vertex of $C_{q}$. Joining $v_{1}$ and $v_{\ell}$ by a path
$v_{1}v_{2 } \cdots v_{\ell}$ on $\ell$ vertices, where $\ell\geq 1$
and $\ell= 1$ means identifying $v_{1}$ with $v_{\ell}$, the
resulting graph (see Fig.1), denoted by $B(p, \ell, q)$, is called
an $\infty$-graph. Let $P_{\ell+2}, P_{p+2}$ and $P_{q+2}$ be three
vertex-disjoint paths, where $0 \leq \ell \leq p\leq q $ and at most
one of them is 0. Identifying the three initial vertices and
terminal vertices of them, respectively, the resulting graph (see
Fig.1), denoted by $P(\ell,p, q)$, is called a $\theta$-graph.
 Obviously ${\cal B}(n)$ consists of two types of graphs:
one type, denoted by ${ \cal B}_{1}(n)$, are those graphs each of
which is an $\infty$-graph or an $\infty$-graph with trees attached;
the other type, denoted by ${ \cal B}_{2}(n)$, are those graphs each
of which is a $\theta$-graph or a $\theta$-graph with trees
attached.

\vspace{1cm}
\begin{center}\unitlength 1mm
\linethickness{0.4pt}
\begin{picture}(65,-35)

\put(-15,5){\circle{16}} \put(3,3) {\makebox(-32,5)[cc]{$C_{p}$}}
\put(-8,5){\circle*{1}} \put(-6,7){\makebox(0,0)[cc]{$v_{1}$}}
\put(-3,5){\circle*{1}}

\put(0,4){$\cdots$}\put(7,5){\circle*{1}}\put(13,5){\circle*{1}}\put(20,5){\circle{16}}
\put(11,7){\makebox(0,0)[cc]{$v_{\ell}$}} \put(3,3)
{\makebox(32,5)[cc]{$C_{q}$}}
\qbezier(-8,5)(-8,5)(-3,5)\qbezier(7,5)(7,5)(13,5)
\put(5,-10){\makebox(0,0)[cc]{{\small \bf \quad  $B(p,\ell,q)$ }}}

\put(40,5){\circle*{1}}

 \put(38,5){\makebox(0,0)[cc]{$u$}}
\qbezier(40,5)(40,5)(48,5)\qbezier(40,5)(40,5)(48,13)\qbezier(40,5)(40,5)(48,-3)
\qbezier(56,13)(56,13)(48,13)\qbezier(56,-3)(56,-3)(48,-3)\qbezier(56,5)(56,5)(48,5)
 \put(48,5){\circle*{1}}\put(56,5){\circle*{1}}
\put(60,4){$\cdots$} \put(60,8){\makebox(0,0)[cc]{$P_{p+2}$}}
\put(68,5){\circle*{1}}\put(76,5){\circle*{1}}

\put(48,13){\circle*{1}}\put(56,13){\circle*{1}}
\put(60,12){$\cdots$}\put(68,13){\circle*{1}}
\put(76,13){\circle*{1}}

\put(60,16){\makebox(0,0)[cc]{$P_{\ell+2}$}}

\put(48,-3){\circle*{1}}\put(56,-3){\circle*{1}}
\put(60,-4){$\cdots$}\put(68,-3){\circle*{1}}\put(76,-3){\circle*{1}}

\put(60,0){\makebox(0,0)[cc]{$P_{q+2}$}} \put(84,5){\circle*{1}}
 \put(86,5){\makebox(0,0)[cc]{$v$}}

\qbezier(84,5)(84,5)(76,5)\qbezier(84,5)(84,5)(76,13)\qbezier(84,5)(84,5)(76,-3)
\qbezier(68,5)(68,5)(76,5)\qbezier(68,13)(68,13)(76,13)\qbezier(68,-3)(68,-3)(76,-3)

\put(60,-10){\makebox(0,0)[cc]{{\small \bf  \quad $P(\ell,p,q)$ }}}
\put(25,-20){\makebox(0,0)[cc]{{\small \bf \quad  Fig. 1 the graphs
$B(p,\ell,q)$ and $P(\ell,p,q)$}}}

 \end{picture}
\end{center}

\vspace{2cm}

 The base of a bicyclic graph $G$, denoted by
$\widehat{G}$, is the (unique) minimal bicyclic subgraph of $G$.  We
use $V_{c}(G)$ to denote all the vertices  on the cycles of a graph
$G$.

 \begin{lemma}
 \label{bound for elfa}
Let $G$ be a graph in ${\cal B}(n)$. Then

(1). $\alpha(G)\geq\frac{n-2}{2}$;

(2). $\alpha(G)=\frac{n-2}{2}$ if and only if
$\widehat{G}=B(p,\ell,q)$ for some three integers $p,\ell,q$, where
$\ell\geq2$,  $p,q$ are odd, and the graph $G-V_{c}(G)$ has a
perfect matching.
 \end{lemma}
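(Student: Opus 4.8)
The plan is to reduce everything to forests. The key elementary facts are: deleting a small vertex set from a bicyclic graph kills all its cycles; for any vertex set $S$ one has $\alpha(G)\ge\alpha(G-S)$; and for a forest $F$, Lemmas~\ref{relation1} and~\ref{relation2} give $\alpha(F)=|V(F)|-\alpha'(F)\ge\lceil|V(F)|/2\rceil$, with $\alpha(F)=|V(F)|/2$ holding exactly when $F$ has a perfect matching.

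\textbf{Part (1).} I would distinguish the shape of $\widehat G$. If $\widehat G$ is a $\theta$-graph $P(\ell,p,q)$, deleting one of its two vertices of degree three leaves a ``spider with attached trees'', i.e.\ a forest on $n-1$ vertices; if $\widehat G=B(p,1,q)$, deleting the vertex common to the two cycles again leaves a forest on $n-1$ vertices; and if $\widehat G=B(p,\ell,q)$ with $\ell\ge2$ (so $C_p,C_q$ are vertex-disjoint), deleting one vertex of $C_p$ and one vertex of $C_q$ leaves a forest $F$ on $n-2$ vertices. In every case $\alpha(G)\ge\alpha(F)\ge|V(F)|/2\ge(n-2)/2$.

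\textbf{Part (2), sufficiency.} Assume the right-hand side: $\widehat G=B(p,\ell,q)$, $\ell\ge2$, $p,q$ odd, and $G-V_c(G)$ has a perfect matching $M_0$. Here $V_c(G)=V(C_p)\cup V(C_q)$ is a disjoint union of size $p+q$, so $|M_0|=(n-p-q)/2$. For a maximum independent set $I$ of $G$, each edge of $M_0$ (both ends in $V(G)\setminus V_c(G)$) meets $I$ at most once, so $|I\cap(V(G)\setminus V_c(G))|\le|M_0|$; moreover $|I\cap V(C_p)|\le\alpha(C_p)=(p-1)/2$ and $|I\cap V(C_q)|\le(q-1)/2$. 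Summing gives $\alpha(G)=|I|\le(n-2)/2$, which combined with Part (1) forces equality.

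\textbf{Part (2), necessity.} Suppose $\alpha(G)=(n-2)/2$; then $n$ is even, since otherwise Part (1) already gives $\alpha(G)\ge(n-1)/2$. If $\widehat G$ were a $\theta$-graph or $B(p,1,q)$, the forest on $n-1$ vertices from Part (1) would give $\alpha(G)\ge\lceil(n-1)/2\rceil=n/2$, a contradiction; hence $\widehat G=B(p,\ell,q)$ with $\ell\ge2$. For every $a\in V(C_p)$, $b\in V(C_q)$ the graph $G-a-b$ is a forest on $n-2$ vertices, so $(n-2)/2=\alpha(G)\ge\alpha(G-a-b)\ge(n-2)/2$ forces $\alpha(G-a-b)=(n-2)/2$, i.e.\ $G-a-b$ has a perfect matching. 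To see $p$ is odd, fix $b\in V(C_q)\setminus\{v_\ell\}$; then $H:=G-b$ is a connected unicyclic graph whose only cycle is $C_p$, and $H-a$ has a perfect matching for every $a\in V(C_p)$. Picking an edge $aa'$ of $C_p$ and perfect matchings $M_a,M_{a'}$ of $H-a,H-a'$, the symmetric difference $M_a\triangle M_{a'}$ is a union of even cycles together with a single path $P$ from $a$ to $a'$ (the only odd-degree vertices of $M_a\triangle M_{a'}$ being $a$ and $a'$); tracing the alternation — the edge of $P$ at $a$ lies in $M_{a'}$ and the edge at $a'$ lies in $M_a$ — shows $P$ has an even number of edges. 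Since $aa'\notin M_a\triangle M_{a'}$, adding $aa'$ to $P$ produces a (simple) odd cycle of $H$, which by unicyclicity must be $C_p$; hence $p$ is odd, and symmetrically $q$ is odd. Finally, for each cycle vertex $w$, choosing $b$ in the other cycle, the perfect matching of the forest $G-w-b$ restricts to a perfect matching of each branch hanging at $w$ (these branches are components of $G-w$), and the perfect matching of $G-v_1-v_\ell$ restricts to a perfect matching of the central piece spanned by $v_2,\dots,v_{\ell-1}$ together with its attached trees; as these pieces are precisely the components of $G-V_c(G)$, that graph has a perfect matching.

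\textbf{Main obstacle.} The only step that is not bookkeeping is the parity argument for $p$ and $q$: realizing that the symmetric difference of the two near-perfect matchings $M_a$ and $M_{a'}$, after re-adding the cycle edge $aa'$, yields a cycle of the unicyclic graph $H=G-b$, which therefore must coincide with $C_p$ and hence has odd length. The rest reduces to the decomposition of $G$ into $\widehat G$ plus attached trees together with the elementary forest identities above.
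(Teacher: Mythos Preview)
Your argument is correct; Part~(1) and the sufficiency half of~(2) essentially match the paper. The necessity direction of~(2) differs in two places. For the parity of $p$ and $q$, the paper takes a much shorter route already inside the proof of~(1): unless $\widehat G=B(p,\ell,q)$ with $\ell\ge2$ and both $p,q$ odd, one can delete a \emph{single} vertex $v$ (a degree-$3$ vertex of a $\theta$-graph, the shared vertex when $\ell=1$, or a vertex on the unique odd cycle when one of $p,q$ is even) so that $G-v$ is bipartite on $n-1$ vertices, whence $\alpha(G)\ge\lceil(n-1)/2\rceil$ and the parity is forced at once. Your symmetric-difference trick is correct (a minor imprecision: $H=G-b$ need not be connected if $b$ carries attached trees, but all you actually use is that its only cycle is $C_p$), yet it is considerably more work than this one-line bipartiteness observation. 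For the perfect matching of $G-V_c(G)$, the paper argues by contradiction --- if some component $T_1$ lacks a perfect matching then $\alpha(T_1)\ge(|V(T_1)|+1)/2$, and splicing a maximum independent set of $T_1$ with one of the complementary forest $G-u-v-V(T_1)$ yields $\alpha(G)\ge(n-1)/2$ --- whereas you build the matching constructively by restricting the perfect matchings of the various $G-a-b$ to the components of $G-V_c(G)$. Both routes are valid; the paper's are more economical, while yours extracts and then reuses the stronger intermediate fact that every $G-a-b$ with $a\in V(C_p)$, $b\in V(C_q)$ has a perfect matching.
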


\begin{proof}
(1). Let $G$ be a graph in ${\cal B}(n)$. Then $\widehat{G}$ is  an
$\infty$-graph, or  a $\theta$-graph. When $\widehat{G}=B(p,\ell,q)$
for some three integers $p,\ell,q$, where $\ell\geq2$, and $p,q$ are
odd, let $v_{1}$ be a vertex on the cycle $C_{p}$, and $v_{\ell}$ be
a vertex on the cycle $C_{q}$. Then $G-v_{1}-v_{\ell}$ is a forest,
and so $\alpha(G)\geq\alpha(G-v_{1}-v_{\ell})\geq\frac{n-2}{2}$. For
other cases we may always choose a proper vertex of $G$, say $v$,
such that  $G-v$ is a bipartite graph, and then
$\alpha(G)\geq\alpha(G-v)\geq\frac{n-1}{2}$.

\medskip

(2). From the proof of (1) we know that if
$\alpha(G)=\frac{n-2}{2}$, then  $\widehat{G}=B(p,\ell,q)$, where
$\ell\geq2$, $p,q$ are odd. Now we prove that the graph $G-V_{c}(G)$
has a perfect matching. Let $$G-V_{c}(G)=T_{1}\bigcup \cdots \bigcup
T_{s},$$ where $T_{i}$ is a tree for each $i=1,\cdots,s$. Suppose to
the contrary that $T_{1}$ has no perfect matching. Write
$|V(T_{1})|=t$, then $\alpha^{\prime}(T_{1})\leq\frac{t-1}{2}$. By
K$\ddot{o}$nig-Egverary theorem we have
$$\alpha(T_{1})=\beta^{\prime}(T_{1})=t-\alpha^{\prime}(T_{1})\geq\frac{t+1}{2}.$$
Let $S_{1}$ be an independent set of $T_{1}$ with
$|S_{1}|=\alpha(T_{1})$.  Let $u$ be the vertex on the cycle and $u$
has a neighbour in $T_{1}$, and $v$ be a vertex on another cycle of
$G$. Then $G-u-v-V(T_{1})$ is a forest. Let $S_{2}$ be an
independent set of $G-u-v-V(T_{1})$ with
$|S_{2}|=\alpha(G-u-v-V(T_{1}))\geq\frac{n-t-2}{2}$. The fact that
$u \not \in (S_{1}\bigcup S_{2})$ insures that $S_{1}\bigcup S_{2}$
is an independent set of $G$.  Thus $\alpha(G)\geq|S_{1}\bigcup
S_{2}|\geq \frac{n-1}{2}$. This contradicts the hypothesis that
$\alpha(G)=\frac{n-2}{2}$.

\medskip
Now we prove the sufficiency for (2).

Write $|V_{c}(G)|=k$. If $\widehat{G}=B(p,\ell,q)$, where
$\ell\geq2$,  $p,q$ are odd, then any independent set of $G$
contains at most $\frac{k-2}{2}$ vertices in $V_{c}(G)$. And if the
graph $G-V_{c}(G)$ has a perfect matching, then any independent set
of $G$ contains at most $\frac{n-k}{2}$ vertices outside of
$V_{c}(G)$. Thus $\alpha(G)\leq\frac{n-2}{2}$. And we have proved
that $\alpha(G)\geq\frac{n-2}{2}$. So $\alpha(G)=\frac{n-2}{2}$.
\end{proof}

Let $${\cal B}(n,\alpha)=\{G\,| \,G \in {\cal B}(n),
\alpha(G)=\alpha\},$$ from Lemma \ref{bound for elfa} we know that
$\alpha\geq\frac{n-2}{2}$. In Section 3 we will determine the graph
with maximal spectral radius in  ${\cal B }(n,\frac{n-2}{2})$. When
$\alpha\geq \frac{n-1}{2}$ the  graph with maximal spectral radius
in ${\cal B }(n,\alpha)$ will be determined in Section 4.

\section{The  graph with maximal spectral radius in  ${\cal B
}(n,\frac{n-2}{2})$}

Let  $F(n,\frac{n-2}{2})$ be the graph as shown in Fig.2. In this
section we will prove that  $F(n,\frac{n-2}{2})$ alone maximizes the
spectral radius among the graphs in ${\cal B}(n,\frac{n-2}{2})$ when
$n\geq 10$.

\vspace{2cm}
\begin{center}
\unitlength 1mm
\linethickness{0.4pt}
\begin{picture}(60,-35)
\put(-25,5){\circle*{1}}
 \put(-25,-5){\circle*{1}}
\put(-15,0){\circle*{1}} \put(-20,8){\circle*{1}}
\put(-17,8.5){$\cdots$} \put(-20,16.5){$\overbrace{\hspace{1cm}}$}
\put(-15,21.5){\makebox(0,0)[cc]{$_{\frac{n-6}{2}}$}}

 \put(-10,8){\circle*{1}}
\put(-20,16){\circle*{1}}
 \put(-10,16){\circle*{1}}
\put(-5,0){\circle*{1}}
\put(5,5){\circle*{1}}
 \put(5,-5){\circle*{1}}

 \qbezier(-15,0)(-15,0)(-25,5)
  \qbezier(-15,0)(-15,0)(-25,-5)
   \qbezier(-15,0)(-15,0)(-20,8)
    \qbezier(-15,0)(-15,0)(-10,8)
     \qbezier(-15,0)(-15,0)(-5,0)
\qbezier(-25,-5)(-25,-5)(-25,5) \qbezier(5,-5)(5,-5)(5,5)
 \qbezier(-20,16)(-20,16)(-20,8)
    \qbezier(-10,16)(-10,16)(-10,8)
\qbezier(5,5)(5,5)(-5,0)\qbezier(5,-5)(5,-5)(-5,0)

\put(43,8.5){$\cdots$} \put(42,16.5){$\overbrace{\hspace{1cm}}$}
\put(47,21.5){\makebox(0,0)[cc]{$_{\frac{n-8}{2}}$}}

\put(-10,-10){\makebox(0,0)[cc]{$F(n,\frac{n-2}{2})$}}
\put(25,5){\circle*{1}}
 \put(25,-5){\circle*{1}}
 \put(35,0){\circle*{1}}
 \put(45,0){\circle*{1}}
 \put(55,0){\circle*{1}}
\put(35,8){\circle*{1}}
\put(42,8){\circle*{1}}
\put(52,8){\circle*{1}}
\put(42,16){\circle*{1}}
\put(52,16){\circle*{1}}
 \put(65,5){\circle*{1}}
 \put(65,-5){\circle*{1}}
\qbezier(45,0)(45,0)(35,0)
 \qbezier(45,0)(45,0)(55,0)
 \qbezier(45,0)(45,0)(35,8)
\qbezier(45,0)(45,0) (42,8) \qbezier(45,0)(45,0)(52,8)
\qbezier(25,5)(25,5)(35,0) \qbezier(25,-5)(25,-5)(35,0)
 \qbezier(65,5)(65,5)(55,0) \qbezier(65,-5)(65,-5)(55,0)
\qbezier(25,5)(25,5)(25,-5)\qbezier(65,-5)(65,-5)(65,5)
 \qbezier(42,16)(42,16)(42,8)
  \qbezier(52,16)(52,16)(52,8)
\put(45,-10){\makebox(0,0)[cc]{$F^{\prime}$}}

\put(20,-20){\makebox(0,0)[cc]{Fig.2 \quad  the graphs
$F(n,\frac{n-2}{2})$ and $F^{\prime}$}}
\end{picture}
\end{center}
\vspace{2.5cm}

\begin{lemma}
\label{juti1}
 Let $F^{\prime}$ and $F(n,\frac{n-2}{2})$ be the
graphs as shown in Fig.2. Then
$\rho(F(n,\frac{n-2}{2}))>\rho(F^{\prime})$.
\end{lemma}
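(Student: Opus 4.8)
\emph{Plan.} I would compute the (relevant factors of the) characteristic polynomials of $F:=F(n,\tfrac{n-2}{2})$ and of $F^{\prime}$ and then compare their largest roots via an algebraic identity. Recall the structures read off from Fig.~2: $F$ is two triangles joined by an edge with $\tfrac{n-6}{2}$ pendant paths on two vertices attached at one endpoint of that edge, while $F^{\prime}$ is two triangles joined by a path on three vertices with one pendant vertex and $\tfrac{n-8}{2}$ pendant paths on two vertices attached at the \emph{middle} vertex of that path. Both graphs are highly symmetric, so, writing $x$ for the Perron vector and $\rho$ for the spectral radius, I set up the eigenvalue equations on the orbits of the automorphism group (equivalently, on the equitable partition into those orbits). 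Eliminating the pendant-path coordinates and the triangle coordinates one at a time — each step dividing only by $\rho,\ \rho-1,\ \rho+1$ or $\rho-2$, all nonzero since $\rho>2$ (each of $F,F^{\prime}$ properly contains $K_{1,4}$) — one gets, by a routine computation, that $\rho(F)$ satisfies
\[
f(x):=x^{4}-2x^{3}-\Bigl(\tfrac n2+1\Bigr)x^{2}+nx+3=0,
\]
and that $\rho(F^{\prime})$ satisfies
\[
g(x):=x^{5}-2x^{4}-\tfrac n2\,x^{3}+nx^{2}-x-2=0 .
\]
A short sign count ($f(0)=3>0$, $f(-1)<0$, $f(2)=-1<0$, $f(x)\to+\infty$, and similarly $g(2)=-4<0$) shows $f$ and $g$ each have a unique root exceeding $2$; since $\rho(F),\rho(F^{\prime})>2$, these roots are exactly $\rho(F)$ and $\rho(F^{\prime})$.

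The key observation is the identity $x\,f(x)-g(x)=-(x^{3}-4x-2)$, immediate by comparing coefficients. Evaluating at $x=\rho(F^{\prime})$, where $g$ vanishes, yields
\[
\rho(F^{\prime})\,f\bigl(\rho(F^{\prime})\bigr)=-\bigl(\rho(F^{\prime})^{3}-4\rho(F^{\prime})-2\bigr).
\]
Hence it suffices to prove $\rho(F^{\prime})^{3}-4\rho(F^{\prime})-2>0$: this forces $f(\rho(F^{\prime}))<0$, and since $f(x)\to+\infty$ as $x\to+\infty$, the largest root $\rho(F)$ of $f$ must then exceed $\rho(F^{\prime})$, which is the assertion of the lemma.

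To bound $\rho(F^{\prime})$ below, I would use the subgraph $W$ of $F^{\prime}$ obtained from a triangle by attaching a path of length $2$ at one of its vertices — for instance the subgraph induced by one of the two triangles of $F^{\prime}$ together with the connecting path, which is an induced subgraph for every $n\ge 10$. A one-line Perron-vector computation as above gives that $\rho(W)$ satisfies $x^{3}-4x-2=0$; since $W\supsetneq C_{3}$ we have $\rho(W)>2$, and $x^{3}-4x-2$ has a unique root larger than $2$, so $\rho(W)$ equals that root. Because $W$ is a proper connected subgraph of the connected graph $F^{\prime}$, we get $\rho(F^{\prime})>\rho(W)$, and therefore $\rho(F^{\prime})^{3}-4\rho(F^{\prime})-2>0$ (the cubic is positive beyond its largest root). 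Combined with the previous paragraph, this proves $\rho(F(n,\tfrac{n-2}{2}))>\rho(F^{\prime})$.

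The main obstacle is the bookkeeping in deriving $f$ and $g$ cleanly: keeping careful track of which linear factors are cancelled during the elimination, and verifying that the reduced quartic and quintic really capture $\rho(F)$ and $\rho(F^{\prime})$ rather than some smaller eigenvalue of the quotient matrices. Once the identity $x f(x)-g(x)=-(x^{3}-4x-2)$ is in hand, the remaining comparison is short.
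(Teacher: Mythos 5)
Your proposal is correct, and its skeleton is the same as the paper's: both reduce the comparison to the quartic $f(x)=x^{4}-2x^{3}-(\tfrac n2+1)x^{2}+nx+3$ for $F(n,\tfrac{n-2}{2})$ and the quintic factor for $F^{\prime}$, and both conclude by evaluating a polynomial identity at $\rho(F^{\prime})$ to force $f(\rho(F^{\prime}))<0$. The execution differs in a genuine way, though. The paper multiplies the quartic by $(x-1)^{2}$ and the quintic by $(x-2)$ so that the two degree-six polynomials differ by $(c-3)x(x-2)+1$ (with $n=2c$), which is positive for every $x>2$ once $c\geq5$; hence the crude bound $\rho(F^{\prime})>2$ (from the maximum degree) suffices. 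You instead compare $xf(x)$ with the quintic, and your (correct) identity $xf(x)-g(x)=-(x^{3}-4x-2)$ has an $n$-independent right-hand side that is negative only beyond the largest root of $x^{3}-4x-2\approx 2.21$, so the bound $\rho(F^{\prime})>2$ is not enough; you repair this with the extra ingredient that the induced subgraph $W$ (triangle with a path of length two attached) has $\Phi(W;x)=(x^{2}-1)(x^{3}-4x-2)$, so $\rho(F^{\prime})>\rho(W)$ gives exactly the bound needed. That subgraph step checks out, and it is an attractive feature that the correction cubic is itself the relevant factor of $\Phi(W;x)$; the paper's choice of multipliers simply avoids needing it. Two small remarks: the uniqueness of the root of $g$ exceeding $2$ is not actually established by quoting $g(2)=-4$ alone (more sign evaluations or a derivative bound would be needed), but it is also not needed, since your quotient-matrix derivation already gives $g(\rho(F^{\prime}))=0$, and uniqueness is only required for $f$, where your sign count does deliver it; and, as you note, the derivations of $f$ and $g$ themselves are the same ``tedious calculation'' the paper performs, so the burden of that verification is identical in both proofs.
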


\begin{proof}
Write $n=2c$. By using Lemma \ref{formular1} and tedious
calculations we have
$$
\Phi(F(n,\frac{n-2}{2});x)=(x^{2}-1)^{c-3}(x+1)^{2}[x^{4}-2x^{3}-(c+1)x^2+2cx+3],$$
$$
\Phi(F^{\prime};x)=(x^{2}-1)^{c-5}(x+1)^{4}(x-2)(x^{5}-2x^{4}-cx^{3}+2cx^2-x-2).
$$
Set $$g(x)=(x-1)^{2}[x^{4}-2x^{3}-(c+1)x^2+2cx+3],
$$
and
$$ h(x)=(x-2)(x^{5}-2x^{4}-cx^{3}+2cx^2-x-2),
$$
then $\rho(F(n,\frac{n-2}{2}))$ is  the largest root of the equation
$g(x)=0$, and  $\rho(F^{\prime})$ is  the largest root of the
equation $h(x)=0$.  When $n\geq 10$, i.e., $c\geq 5$ we have
$\rho(F^{\prime})\geq\sqrt{\Delta(F^{\prime})}>2$, and  it may be
verified that
$$h(x)-g(x)=(c-3)x(x-2)+1,$$ then $g(\rho(F^{\prime}))<0$. Thus the
largest root of the equation
  $g(x)=0$ is larger than $\rho(F^{\prime})$, i.e., $\rho(F(n,\frac{n-2}{2}))>\rho(F^{\prime})$.
\end{proof}

\begin{theorem}
Graph $F(n,\frac{n-2}{2})$ alone maximizes the spectral radius among
the graphs in ${\cal B}(n,\frac{n-2}{2})$ when $n\geq 10$.
\end{theorem}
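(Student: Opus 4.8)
The plan is to take a graph $G^{*}$ attaining $\max\{\rho(G):G\in{\cal B}(n,\frac{n-2}{2})\}$, fix a Perron vector $x$ of $G^{*}$, and push $G^{*}$ toward $F(n,\frac{n-2}{2})$ by a sequence of local edge-switches, each of which strictly increases the spectral radius and keeps the graph inside ${\cal B}(n,\frac{n-2}{2})$. The useful reformulation, used throughout, is that by Lemma~\ref{bound for elfa}(2) membership in ${\cal B}(n,\frac{n-2}{2})$ is equivalent to the purely structural conditions that $\widehat{G}=B(p,\ell,q)$ with $\ell\ge2$ and $p,q$ odd and that $G-V_{c}(G)$ has a perfect matching; so after each switch it is enough to check these conditions rather than to recompute $\alpha(G)$. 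Hence I may assume $\widehat{G^{*}}=B(p,\ell,q)$ with $\ell\ge2$, $p,q$ odd, and every component of $G^{*}-V_{c}(G^{*})$ having a perfect matching. I will also use that $\rho(G^{*})>2$ (a bicyclic graph strictly contains a cycle), so that the Perron weights strictly increase along each pendant path toward the core.

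The first goal is to make both cycles triangles. Suppose $p\ge5$ and write $C_{p}=v_{1}u_{1}u_{2}\cdots u_{p-1}v_{1}$ with $v_{1}$ the vertex of $C_{p}$ lying on the bridge. Deleting the edge $u_{2}u_{3}$ and adding $v_{1}u_{3}$ turns $C_{p}$ into a cycle $C_{p-2}$ through $v_{1}$ with a pendant path $v_{1}u_{1}u_{2}$ attached at $v_{1}$; this keeps both cycles odd, leaves $\ell$ and the $\infty$-type of the base unchanged, and adds only a $P_{2}$ to the forest part, so the new graph again belongs to ${\cal B}(n,\frac{n-2}{2})$. If $x_{v_{1}}\ge x_{u_{2}}$ the switch strictly increases $\rho$ by Lemma~\ref{deprive l}, contradicting extremality; and if $x_{u_{2}}>x_{v_{1}}$, then Lemma~\ref{deprive l} applied with $(u,v)=(u_{2},v_{1})$ to the bridge edge and to the trees hanging at $v_{1}$ already increases $\rho$, again a contradiction. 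Hence $p\le4$, and oddness gives $p=3$; symmetrically $q=3$.

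The second goal is to reduce the pendant trees and the bridge. Let $T$ be a component of $G^{*}-V_{c}(G^{*})$ attached at a triangle vertex $c$. If $T$ is not a single edge, a longest path of $T$ from $c$ ends in a pendant $P_{2}$ hanging at some vertex $w$ of $T$, and a switch of Lemma~\ref{deprive l} type (once the relevant weight comparison is established, using the perfect-matching structure of $T$) moves this $P_{2}$ closer to $c$ while preserving the perfect matching and strictly increasing $\rho$. Iterating, every pendant tree becomes a union of pendant $P_{2}$'s; when $\ell$ is odd the perfect-matching condition forces in addition one pendant vertex on the middle bridge vertex. Lemma~\ref{grafting} then forces the pendant paths at any fixed vertex to be as balanced as possible, hence of length $2$, and Lemma~\ref{deprive l} concentrates all these $P_{2}$'s at the unique admissible vertex of largest Perron weight; a bridge-shortening switch of the same shape as in the triangle step removes all cases $\ell\ge4$. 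The only graphs that survive are $F(n,\frac{n-2}{2})$ --- arising from $\ell=2$, and also from $\ell=3$ whenever some admissible switch brings the bridge down to length $2$ --- and $F^{\prime}$, which is the $\ell=3$ configuration in which the middle bridge vertex carries the largest Perron weight and no admissible switch is available. Finally Lemma~\ref{juti1} gives $\rho(F(n,\frac{n-2}{2}))>\rho(F^{\prime})$ for $n\ge10$, so $G^{*}=F(n,\frac{n-2}{2})$, and uniqueness is immediate since every switch used is strictly $\rho$-increasing.

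The step I expect to be the main obstacle is the second one. The monotonicity of $x$ along a pendant path is easy, but the Perron-weight comparisons needed inside a pendant tree with a perfect matching and along the bridge (to know which edge may be switched and toward which endpoint) are delicate; one must also arrange the switches in an order that provably terminates, and, crucially, verify at every stage that the move destroys neither the perfect matching of $G^{*}-V_{c}(G^{*})$ nor the oddness of a cycle, since the rigidity of the independence-number constraint makes any such violation fatal. The graph $F^{\prime}$ is precisely the configuration for which no admissible switch shortens the bridge, which is why the explicit characteristic-polynomial comparison of Lemma~\ref{juti1} cannot be bypassed.
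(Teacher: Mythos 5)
Your overall route is the paper's: take an extremal $G^{*}$, use Lemma~\ref{bound for elfa}(2) as a purely structural membership test for ${\cal B}(n,\frac{n-2}{2})$, push $G^{*}$ toward the target by $\rho$-increasing switches from Lemma~\ref{deprive l}, and dispose of the surviving bridge-length-$3$ configuration $F^{\prime}$ by the polynomial comparison of Lemma~\ref{juti1}. Your first step (both cycles become triangles) is sound and is only a minor variant of the paper's corresponding claim; note the trick that makes it work: you give \emph{two} alternative switches, one for each outcome of the single comparison $x_{v_{1}}$ versus $x_{u_{2}}$, so you never need to know which coordinate is actually larger.

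The genuine gap is your second step, and it is exactly the part you flag yourself: the migration of pendant $P_{2}$'s inside a tree, their concentration at ``the unique admissible vertex of largest Perron weight,'' and the reduction of the bridge all rest on Perron-weight comparisons that you never establish, and no terminating order of switches is given. The paper shows that this difficulty is to be avoided, not solved: every remaining reduction is formulated, like your cycle step, as a pair of alternative switches covering both outcomes of one coordinate comparison --- Claim 1 (a cycle vertex $w\neq v_{1},v_{\ell}$ of degree at least $3$: move its tree edge to $v_{1}$ if $x_{v_{1}}\geq x_{w}$, otherwise move the bridge edge $v_{1}v_{2}$ to $w$), the comparison of $x_{v_{1}}$ with $x_{v_{\ell}}$ showing trees hang from at most one junction, Claim 3 bounding the degrees off the cycles, and Claim 4 ($\ell\leq 3$). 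These degree and length restrictions pin $G^{*}$ down to $F(n,\frac{n-2}{2})$ or $F^{\prime}$ with no delicate interior comparisons at all. A second concrete flaw: your appeal to Lemma~\ref{grafting} to force all pendant paths to have two vertices does not work inside ${\cal B}(n,\frac{n-2}{2})$. A single grafting step changes the parity of both path lengths, so it destroys the perfect matching of $G-V_{c}(G)$ (each attached path must have an even number of off-cycle vertices), it stalls on the pair of orders $(4,2)$, and it can never increase the number of attached paths, whereas passing to $F(n,\frac{n-2}{2})$ requires exactly that. The correct move here is a Lemma~\ref{deprive l} split at the attachment vertex: since $\rho(G^{*})>2$, the coordinates strictly decrease outward along a pendant path, so for a path $v_{1}u_{1}u_{2}u_{3}u_{4}$ one may shift the edge $u_{2}u_{3}$ to $v_{1}u_{3}$ (using $x_{v_{1}}>x_{u_{2}}$), which stays in the class, increases $\rho$, and replaces one long even path by two paths of two vertices.
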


\begin{proof}
Suppose $G^{*}$ is a graph with maximal spectral radius among the
graphs in ${\cal B}(n,\frac{n-2}{2})$. From (2) of Lemma \ref{bound
for elfa} we know that $\widehat{G^{*}}=B(p,\ell,q)$ for some three
integers $p,\ell,q$, where  $\ell\geq2$, $p,q$ are odd, and the
subgraph $G^{*}-V_{c}(G^{*})$ has a perfect matching.  Denote by
$v_{1}v_{2}\cdots v_{\ell-1}v_{\ell}$ the path joining the cycles
$C_{p}$ and $C_{q}$, where $v_{1}$ lies on $C_{p}$ and  $v_{\ell}$
lies on $C_{q}$. Let $x$ be the Perron vector of $G^{*}$.

\medskip
{\bf Claim 1.} Any  vertex in
$V_{c}(G^{*})\backslash\{v_{1},v_{\ell}\}$
 has degree 2.
\medskip

{\bf Proof of Claim 1.} Suppose to the contrary that there exists a
vertex in $V_{c}(G^{*})\backslash\{v_{1},v_{\ell}\}$, say $w$, with
degree at least 3. Without loss of generality  assume that $w$ lies
on $C_{p}$. Let $w^{\prime}$ be a neighbour of $w$ such that
$w^{\prime}\not \in V_{c}(G^{*})$.  Set
$$G^{\prime}= \left\{\begin{array}{ll}
G^{*}-ww^{\prime}+v_{1}w^{\prime}, &\, \mbox{if}\, \, x_{v_{1}}\geq x_{w}; \\
G^{*}-v_{1}v_{2}+ wv_{2},& \, \mbox{if}\, \,x_{w}>x_{v_{1}}.
\end{array}
\right.$$

Then $G^{\prime}-V_{c}(G^{\prime})$  also has a perfect matching,
furthermore $G^{\prime}$ is  in ${\cal B}(n,\frac{n-2}{2})$. While
we have $\rho(G^{\prime})>\rho (G^{*})$ from Lemma \ref{deprive l}.
This contradicts the definition of $G^{*}$.

\medskip
{\bf Claim 2.} $p=q=3$.
\medskip

{\bf Proof of Claim 2.}
\medskip
Suppose to the contrary that $p \geq 5$. Denote by
$C_{p}=v_{1}w_{1}w_{2}\cdots w_{p-1}w_{p}(=v_{1})$. Set
$$G^{\prime}= \left\{\begin{array}{ll}
G^{*}-w_{p-1}v_{1}+w_{2}v_{1}, &\, \mbox{if}\, \, x_{w_{2}}\geq x_{w_{p-1}}; \\
G^{*}-w_{2}w_{1}+ w_{p-1}w_{1},& \, \mbox{if}\,
\,x_{w_{p-1}}>x_{w_{2}}.
\end{array}
\right.$$ Then  $G^{\prime}$ is also in ${\cal B}(n,\frac{n-2}{2})$,
while  $\rho(G^{\prime})>\rho (G^{*})$.

 By comparing  the coordinates   $x_{v_{1}}$ and $x_{v_{\ell}}$ and using Lemma \ref{deprive l}, we
  may prove that at most one of $\{v_{1},v_{\ell}\}$ with degree more
than 3. Next we may suppose that $d(v_{1})\geq d(v_{\ell})$.

\medskip
{\bf Claim 3.} At most one   vertex outside of  $V_{c}(G^{*})$  has
degree more than 3.
\medskip

{\bf Proof of Claim 3.} Suppose to the contrary that there exist two
vertices, say $u,v$, outside of  $V_{c}(G^{*})$ with degree more
than 3.  Without loss of generality  assume that $x_{u}\geq x_{v}$.
Let $v^{\prime}$ be a neighbour of $v$ on the path between $u$ and
$v$, $v^{\prime\prime}$ be the vertex saturated by $v$ in a perfect
matching of $G^{*}-V_{c}(G^{*})$. Since $d(v)\geq3$, we may suppose
that $w\in (N(v)\backslash\{v^{\prime},v^{\prime\prime}\})$. Set
$G^{\prime}= G^{*}-vw+uw$. Then $G^{\prime}$ is also in ${\cal
B}(n,\frac{n-2}{2})$, while $\rho(G^{\prime})>\rho (G^{*})$.

\medskip
By using the similar arguments as the proof of Claim 3, we may prove
that if $d(v_{1})\geq 4$ then each vertex outside of  $V_{c}(G^{*})$
has degree at most 2.

\medskip
{\bf Claim 4.} $\ell \leq3$.
\medskip

{\bf Proof of Claim 4.}  Suppose to the contrary that $\ell \geq 4$,
then $v_{2}\not= v_{\ell-1}$. Set
$$G^{\prime}= \left\{\begin{array}{ll}
G^{*}-v_{\ell-1}v_{\ell}+v_{2}v_{\ell}, &\, \mbox{if}\, \, x_{v_{2}}\geq x_{v_{\ell-1}}; \\
G^{*}-v_{2}v_{1}+ v_{\ell-1}v_{1},& \, \mbox{if}\,
\,x_{v_{\ell-1}}>x_{v_{2}}.
\end{array}
\right.$$ Then  $G^{\prime}$ is also in ${\cal B}(n,\frac{n-2}{2})$,
while  $\rho(G^{\prime})>\rho (G^{*})$. Thus $\ell \leq3$.

Furthermore by using the above results and Lemma \ref{grafting} we
have if $\ell=2$,  then  $G^{*}=F(n,\frac{n-2}{2})$. If $\ell=3$,
then $G^{*}=F^{\prime}$, while from Lemma \ref{juti1} we know that
$\rho(F^{\prime})<\rho(F(n,\frac{n-2}{2}))$. Thus $\ell =2$ and we
have  $G^{*}=F(n,\frac{n-2}{2})$.
\end{proof}

\section{The  graph with maximal spectral radius in  ${\cal B
}(n,\alpha)$ when $\alpha\geq \frac{n-1}{2}$ }

 It is easy to see  that every  connected graph $G$ has at most
$\alpha(G)$ pendant vertices. In this section we may suppose
$\alpha\geq \frac{n-1}{2}$. Now we give a partition for the graphs
in ${\cal B }(n,\alpha)$ according to the number of the pendant
vertices.

\medskip

Class \,($C1$)\,: The graphs in   ${\cal B }(n,\alpha)$ with $k$
pendant vertices, where $k\leq \alpha -2$.

Class \,($C2$)\,: The graphs in   ${\cal B }(n,\alpha)$ with
$\alpha-1$ pendant vertices.

 Class \,($C3$)\,: The graphs in   ${\cal B }(n,\alpha)$ with
$\alpha$ pendant vertices.

We will discuss the spectral radii of the graphs in Class \,($C1$)
in Section 4.1, the spectral radii of the graphs in Class \,($C3$)
in Section 4.2, and the spectral radii of the graphs in Class
\,($C2$) in Section 4.3.

\subsection{The graphs in   ${\cal B }(n,\alpha)$ with $k$
pendant vertices and  $k\leq \alpha -2$.}

Let $B^{\sharp}(k)$ be the graph on $n$ vertices, obtained by
attaching $k$ paths of almost equal length to the vertex with degree
4 of $B(3,1,3)$. The following result was shown in
\cite{Guoshuguang} and

\begin{lemma}
(\cite{Guoshuguang, ?})
 \label{extremal graph by guo}
 Suppose $G$ is a bicyclic  graph  on $n$ vertices
with $k$ pendant vertices, then $\rho(G)\leq \rho(B^{\sharp}(k))$,
with equality if and only if $G = B^{\sharp}(k)$.
\end{lemma}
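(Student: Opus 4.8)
The plan is to take a graph $G$ that attains the maximum of $\rho$ among all bicyclic graphs on $n$ vertices with exactly $k$ pendant vertices, let $x$ be its Perron vector, and then transform $G$ step by step, using only Lemmas~\ref{deprive l} and~\ref{grafting}, until it is forced to be $B^{\sharp}(k)$. Throughout, every transformation must (i) keep the graph bicyclic and (ii) leave the number of pendant vertices equal to $k$; this second requirement is what dictates which relocations are legal at each stage.

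\emph{Step 1: the base.} Write $\widehat{G}=B(p,\ell,q)$ or $\widehat{G}=P(\ell,p,q)$. If some cycle has length $\geq 4$, pick two vertices $u,v$ on it, compare $x_u$ and $x_v$, and use Lemma~\ref{deprive l} to replace the edge closing the long cycle by a shorter chord; this keeps $G$ bicyclic, creates no new pendant and destroys none, and strictly increases $\rho$, so both cycles of $\widehat{G}$ are triangles. A similar shift applied to the path joining the two triangles (in the $\infty$-case) or to the paths of the $\theta$-graph pushes $\ell$ and those path-lengths to their minimum, leaving $\widehat{G}$ equal to the bowtie $B(3,1,3)$ or to one of the few smallest $\theta$-graphs. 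The remaining comparison between the bowtie base and the $\theta$-bases — which is \emph{not} settled by size or degree considerations alone, since for instance $B(3,1,3)$ and $K_4-e$ have the same spectral radius — is carried out by writing the characteristic polynomials of the two candidate extremal graphs via Lemma~\ref{formular1} and comparing their largest roots, exactly in the style of Lemma~\ref{juti1}. The outcome is $\widehat{G}=B(3,1,3)$; let $u$ be its vertex of degree $4$.

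\emph{Step 2: everything hangs at $u$, as paths.} Since $u$ carries the largest Perron entry among the vertices of the bowtie (an eigenvalue-equation comparison), Lemma~\ref{deprive l} lets me move an entire pendant tree from any other base vertex onto $u$; this preserves the pendant count and increases $\rho$, so $G-V(\widehat{G})$ consists of trees attached only at $u$. If one such tree had an interior vertex $v$ of degree $\geq 3$, I would again apply Lemma~\ref{deprive l} comparing $x_u$ with $x_v$, relocating all but one branch at $v$ onto an existing pendant path so that the number of pendant vertices does not go up; since $G$ and $B^{\sharp}(k)$ both have exactly $k$ pendants, the count stays at $k$. Hence $G$ is the bowtie with exactly $k$ paths attached at $u$.

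\emph{Step 3: almost equal lengths, and the obstacle.} If two of these $k$ paths had lengths differing by at least $2$, Lemma~\ref{grafting}, applied with the role of its graph played by the bowtie together with the remaining $k-2$ paths and the role of its vertex $v$ played by $u$, shows that equalizing the two lengths strictly increases $\rho$ without changing the pendant count. Iterating, the $k$ path-lengths differ pairwise by at most $1$, which is exactly the definition of $B^{\sharp}(k)$, so $G=B^{\sharp}(k)$; this proves both the inequality and the characterization of equality. The main obstacle I anticipate is not any single move but the global constraint that the pendant number stay at $k$: the naive shortening of a long pendant path, or the naive collapse of a cycle, can change the number of degree-one vertices, so each relocation must be realized by the form of Lemma~\ref{deprive l} that touches only interior edges; and a handful of boundary cases (for instance $k$ close to $n-5$, or small $n$) have to be checked by hand.
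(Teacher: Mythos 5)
The paper does not prove this lemma: it is imported verbatim from \cite{Guoshuguang}, so your sketch has to stand on its own, and as written it contains a genuine gap rather than a routine boundary issue. The gap is exactly the pendant-count constraint that you name at the end and then set aside. Your Step 1 asserts that shortening a cycle of length at least $4$ by a single application of Lemma \ref{deprive l} ``creates no new pendant and destroys none''. That is false in the typical situation: the rotation that shortens a cycle (the same move as Claim 2 in the proof of the theorem in Section 3) removes a cycle edge at a vertex of degree $2$, so that vertex becomes a new pendant vertex and the displaced arc becomes a pendant path; the new graph has $k+1$ pendant vertices, so the maximality of $G$ within the class of graphs with exactly $k$ pendant vertices yields no contradiction. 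There is no variant of Lemma \ref{deprive l} ``touching only interior edges'' that shortens a cycle all of whose vertices have degree $2$ while keeping the count; what is needed is a composite move (contract a cycle edge lying on an internal path and simultaneously lengthen an attached pendant path), whose effect on $\rho$ is not covered by Lemma \ref{deprive l} or Lemma \ref{grafting} --- this is precisely where the known proofs invoke the Hoffman--Smith internal-path lemma \cite{Hoffman}, which appears in this paper's bibliography but not in your argument. The same counting problem infects Step 2: ``relocating all but one branch at $v$ onto an existing pendant path'' is not an operation supplied by Lemma \ref{deprive l}, which only moves edges onto a single vertex of larger Perron coordinate, and moving branches onto $u$ creates new pendant paths at $u$; moreover the claim that the degree-$4$ vertex $u$ carries the largest Perron entry is unjustified once large trees sit at other base vertices --- one needs the two-sided comparison used in the Claims of Sections 3 and 4, after which the ``center'' may migrate and must be controlled.

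In addition, the step you defer is the crux, not a formality. Since, as you rightly observe, $\rho(B(3,1,3))=\rho(K_4-e)=\frac{1+\sqrt{17}}{2}$, deciding between the bowtie-based graph and the best $\theta$-based competitor with $k$ attached paths genuinely depends on $n$ and $k$ and requires an explicit characteristic-polynomial comparison in the style of Lemma \ref{juti1} or Lemma \ref{juti2}; your sketch names the method but neither identifies the extremal $\theta$-based candidate nor carries out the comparison. (Also, the statement implicitly requires $k\le n-5$, since for $k=n-4$ the graph $B^{\sharp}(k)$ does not exist while $\theta$-based graphs with $k$ pendant vertices do.) So the outline follows a plausible strategy, but Steps 1 and 2 would fail as stated without a pendant-count-preserving tool such as the internal-path lemma, and the decisive base comparison is missing.
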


\vspace{0.5cm}
\begin{center}
\unitlength 1mm \linethickness{0.4pt}
\begin{picture}(60,-35)
\put(15,5){\circle*{1}}
 \put(15,-5){\circle*{1}}
 \put(25,0){\circle*{1}}
\put(20,8){\circle*{1}} \put(30,8){\circle*{1}} \put(23,4){$\cdots$}
\put(20,8.5){$\overbrace{\hspace{1cm}}$}
\put(25,12){\makebox(0,0)[cc]{$_{2\alpha-n+1}$}}

\put(20,8){\circle*{1}} \put(30,8){\circle*{1}}
\put(23,-14){$\cdots$} \put(20,-16.5){$\underbrace{\hspace{1cm}}$}
\put(25,-19.5){\makebox(0,0)[cc]{$_{n-\alpha-3}$}}
\put(20,-8){\circle*{1}} \put(30,-8){\circle*{1}}
\put(20,-16){\circle*{1}} \put(30,-16){\circle*{1}}

 \put(35,5){\circle*{1}}
\put(35,-5){\circle*{1}} \qbezier(25,0)(25,0)(15,5)
 \qbezier(25,0)(25,0)(15,-5)
\qbezier(25,0)(25,0) (35,5)\qbezier(25,0)(25,0)(35,-5)
\qbezier(25,0)(25,0)(20,8) \qbezier(25,0)(25,0)(30,8)
\qbezier(25,0)(25,0)(20,-8) \qbezier(25,0)(25,0)(30,-8)
 \qbezier(15,5)(15,5)(15,-5)\qbezier(35,5)(35,5)(35,-5)
\qbezier(20,-16)(20,-16)(20,-8) \qbezier(30,-16)(30,-16)(30,-8)

\put(25,-25){\makebox(0,0)[cc]{Fig.3 \quad the graph $M(n,\alpha)$}}

\end{picture}
\end{center}
\vspace{2.5cm}

\begin{theorem}
 Let $G$ be a graph in ${\cal B}(n,\alpha)$ with $k$ pendant vertices. When $k\leq \alpha-2$ and $\alpha\geq \frac{n-2}{2}$, we have
 $\rho(G)\leq \rho(M(n,\alpha))$ with equality  if and only if $G
 =M(n,\alpha)$.
 \end{theorem}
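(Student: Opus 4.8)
The plan is to combine the structural characterization of the extremal bicyclic graph with fixed number of pendant vertices (Lemma \ref{extremal graph by guo}) with a count of how large $k$ can be under the independence constraint $\alpha(G)=\alpha$, and then to identify $M(n,\alpha)$ as the right member of the family $B^{\sharp}(k)$. First I would observe that, by Lemma \ref{extremal graph by guo}, among all bicyclic graphs on $n$ vertices with exactly $k$ pendant vertices the spectral radius is maximized uniquely by $B^{\sharp}(k)$, so it suffices to analyze the graphs $B^{\sharp}(k)$ for all admissible $k\le\alpha-2$. The graph $B^{\sharp}(k)$ is $B(3,1,3)$ (two triangles sharing the unique degree-$4$ vertex $u$) with $k$ almost-equal paths attached at $u$; it has $n$ vertices, $n-5$ of which lie on the attached paths. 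I would next compute $\alpha(B^{\sharp}(k))$ explicitly: an independent set avoids $u$ and picks a maximum independent set in each attached path plus one vertex from each triangle that is not $u$, so a short computation in terms of $k$ and the path lengths gives $\alpha(B^{\sharp}(k))$. The key monotonicity fact is that $\alpha(B^{\sharp}(k))$ is nondecreasing in $k$ (lengthening fewer, longer paths cannot increase the matching-type quantity that controls $\alpha$), so for a fixed target $\alpha$ the largest legal value of $k$ among $\{k : \alpha(B^{\sharp}(k))=\alpha\}$ — call it $k_0$ — is the one we want, because $\rho(B^{\sharp}(k))$ is increasing in $k$ (more and shorter pendant paths at the high-degree vertex push the spectral radius up, by repeated application of Lemma \ref{grafting} in the grafting direction that concentrates paths).

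The heart of the argument is therefore two monotonicity claims: (i) $\rho(B^{\sharp}(k))<\rho(B^{\sharp}(k+1))$ whenever both are defined on $n$ vertices, and (ii) $\alpha(B^{\sharp}(k))\le\alpha(B^{\sharp}(k+1))$ in the same range, together with the identification that the graph $B^{\sharp}(k_0)$ with $k_0$ chosen maximal subject to $\alpha=\alpha$ and $k_0\le\alpha-2$ is exactly $M(n,\alpha)$ as drawn in Fig.~3. For (i) I would compare $B^{\sharp}(k)$ and $B^{\sharp}(k+1)$ by noting they have the same number of non-$u$ vertices distributed on paths at $u$; passing from $k$ to $k+1$ paths of almost equal total length is, up to isomorphism, obtained by a sequence of moves each of which takes two paths of lengths differing by at least $2$ and rebalances them, i.e. the \emph{reverse} of Lemma \ref{grafting}, which strictly increases $\rho$. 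For (ii) the same rebalancing, read through Lemmas \ref{relation1} and \ref{relation2} (the attached forest is a disjoint union of paths, hence bipartite, so $\alpha$ equals the edge covering number which is controlled by the matching number of the path collection), gives the needed inequality. Finally, a direct application of Lemma \ref{formular1} to $M(n,\alpha)$ — expanding at the degree-$(2\alpha-n+1+2+n-\alpha-3)=\,$high-degree vertex $u$, using that all attached branches are pendant edges or pendant paths of length $2$ — yields $\Phi(M(n,\alpha);x)$ as a product of cyclotomic-type factors times $x^4-(\alpha+3)x^2-4x+(2\alpha-n+1)$, so $\rho(M(n,\alpha))$ is the largest root of that quartic, matching the statement.

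I would close by assembling these pieces: given any $G\in{\cal B}(n,\alpha)$ with $k\le\alpha-2$ pendant vertices, Lemma \ref{extremal graph by guo} gives $\rho(G)\le\rho(B^{\sharp}(k))$ with equality iff $G=B^{\sharp}(k)$; claim (ii) forces $k\le k_0$ where $k_0$ is the largest index with $\alpha(B^{\sharp}(k_0))=\alpha$; claim (i) gives $\rho(B^{\sharp}(k))\le\rho(B^{\sharp}(k_0))=\rho(M(n,\alpha))$; and equality throughout forces $k=k_0$ and $G=B^{\sharp}(k_0)=M(n,\alpha)$. The main obstacle I anticipate is pinning down exactly which $k$ is $k_0$ and verifying it equals the combinatorial data in Fig.~3 (the split into $2\alpha-n+1$ pendant edges and $n-\alpha-3$ pendant paths of length $2$ at $u$): this requires carefully solving $\alpha(B^{\sharp}(k))=\alpha$ for the path-length profile and checking the boundary constraint $k_0\le\alpha-2$ is active exactly in the claimed regime $\alpha\ge\frac{n-2}{2}$; the spectral monotonicity (i), while intuitively clear from Lemma \ref{grafting}, also needs the rebalancing moves to be set up so that each step genuinely has the form covered by that lemma.
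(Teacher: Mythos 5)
Your overall route is the paper's: reduce to $B^{\sharp}(k)$ via Lemma \ref{extremal graph by guo}, prove $\rho(B^{\sharp}(k))<\rho(B^{\sharp}(k+1))$ by Lemma \ref{grafting} (split a longest attached path into a shorter path plus a pendant edge and rebalance, each move being the ``balanced beats unbalanced'' direction of that lemma), and identify the extremal member of the family $B^{\sharp}(\cdot)$ with $M(n,\alpha)$; the quartic $f(x)=x^4-(\alpha+3)x^2-4x+(2\alpha-n+1)$ obtained from Lemma \ref{formular1} is likewise exactly what the paper records (in Lemma \ref{juti2}). So your claim (i) and the final assembly are sound in outline and coincide with the published argument.

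However, your claim (ii) --- that $\alpha(B^{\sharp}(k))$ is nondecreasing in $k$ --- is false, and the logical role you give it (``claim (ii) forces $k\le k_{0}$'') is where the write-up breaks. A maximum independent set of $B^{\sharp}(k)$ avoids the centre, takes one vertex from each triangle and a maximum independent set of each attached path, so $\alpha(B^{\sharp}(k))=2+\sum_{i}\lceil \ell_{i}/2\rceil$, where the $\ell_{i}$ are the almost equal numbers of vertices on the attached paths with $\sum_{i}\ell_{i}=n-5$. For $n=11$ this gives $\alpha(B^{\sharp}(2))=2+2+2=6$ but $\alpha(B^{\sharp}(3))=2+1+1+1=5$, so the quantity is not monotone, and no reading of Lemmas \ref{relation1} and \ref{relation2} will rescue (ii). Fortunately (ii) and the whole detour through $k_{0}$ are unnecessary: the hypothesis already gives $k\le\alpha-2$, so strict monotonicity (i) yields $\rho(G)\le\rho(B^{\sharp}(k))\le\rho(B^{\sharp}(\alpha-2))$, and it remains only to check the single identity $B^{\sharp}(\alpha-2)=M(n,\alpha)$. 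This holds because under the standing assumption $\alpha\ge\frac{n-1}{2}$ of this section (needed so that $2\alpha-n+1\ge0$ and Fig.~3 makes sense), distributing $n-5$ vertices into $\alpha-2$ almost equal paths gives exactly $n-\alpha-3$ paths on two vertices and $2\alpha-n+1$ pendant edges, and indeed $\alpha(M(n,\alpha))=2+(n-\alpha-3)+(2\alpha-n+1)=\alpha$. With $k_{0}$ and claim (ii) deleted and replaced by this direct check, your argument becomes precisely the paper's proof, including the equality analysis: equality forces $G=B^{\sharp}(k)$ by Lemma \ref{extremal graph by guo} and $k=\alpha-2$ by the strictness in (i).
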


\begin{proof}
First by using  Lemma \ref{grafting} directly we have if  $1\leq k
\leq n-6$, then $\rho(B^{\sharp}(k))<\rho(B^{\sharp}(k+1))$. It is
easy to see that when $\alpha\geq \frac{n-1}{2}$, then
$B^{\sharp}(\alpha-2)=M(n,\alpha)$. Then from  Lemma \ref{extremal
graph by guo} we have $$\rho(G)\leq \rho(B^{\sharp}(k))\leq
\rho(B^{\sharp}(\alpha-2))=\rho(M(n,\alpha)).$$ Furthermore it is
not difficult to see that the quality holds if and only if $G
=M(n,\alpha)$.
\end{proof}

\subsection{The  graphs  in  ${\cal B }(n,\alpha)$ with $\alpha$ pendant vertices}

Set $${\cal B}(n,\alpha,\alpha)=\{G \,|\, G \in {\cal B}(n,\alpha)
\,\, \mbox{and}\,\, G \,\, \mbox{contains}\,\, \alpha
\,\,\mbox{pendant vertices}\}.$$ In this section we will prove that
the spectral radii of the graphs in  ${\cal B}(n,\alpha,\alpha)$ are
less than that of $M(n,\alpha)$. It is easy to see that a graph $G$
is in ${\cal B}(n,\alpha,\alpha)$ if and only if $G \in {\cal
B}(n,\alpha)$ and every non-pendant vertex of $G$ has at least one
pendant neighbour. For $i=1,2$ set
$${\cal B}_{i}(n,\alpha,\alpha)=\{G \,|\, G \in {\cal B}(n,\alpha,\alpha)
\,\mbox{and} \, G \in{\cal B}_{i}(n)\}.$$

\vspace{1cm}
\begin{center}
\unitlength 1mm \linethickness{0.4pt}
\begin{picture}(60,-35)

\put(-25,5){\circle*{1}} \put(-25,7){\makebox(0,0)[cc]{$w$}}

\put(-33,5){\circle*{1}}
 \put(-25,-5){\circle*{1}}
\put(-25,-7){\makebox(0,0)[cc]{$w^{\prime}$}}

 \put(-33,-5){\circle*{1}}
\put(-15,0){\circle*{1}} \put(-20,8){\circle*{1}}
\put(-10,8){\circle*{1}}

\put(-17,4){$\cdots$} \put(-20,8.5){$\overbrace{\hspace{1cm}}$}
\put(-15,12){\makebox(0,0)[cc]{$_{2\alpha-n+1}$}}

 \put(-20,-8){\circle*{1}}
\put(-10,-8){\circle*{1}} \put(-20,-16){\circle*{1}}
\put(-10,-16){\circle*{1}} \put(-5,5){\circle*{1}}
\put(-5,7){\makebox(0,0)[cc]{$u$}}

\put(-5,-5){\circle*{1}} \put(3,5){\circle*{1}}
\put(3,-5){\circle*{1}} \qbezier(-15,0)(-15,0)(-25,5)
 \qbezier(-15,0)(-15,0)(-25,-5) \qbezier(-15,0)(-15,0)(-20,8)
  \qbezier(-15,0)(-15,0)(-10,8)\qbezier(-15,0)(-15,0)(-5,5)
 \qbezier(-15,0)(-15,0)(-5,-5) \qbezier(-15,0)(-15,0)(-20,-8)
  \qbezier(-15,0)(-15,0)(-10,-8)\qbezier(-20,-16)(-20,-16)(-20,-8)
  \qbezier(-10,-16)(-10,-16)(-10,-8)\qbezier(-5,-5)(-5,-5)(-5,5) \qbezier(-25,-5)(-25,-5)(-25,5)
\qbezier(-33,5)(-33,5)(-25,5)\qbezier(-33,-5)(-33,-5)(-25,-5)
\qbezier(3,5)(3,5)(-5,5)\qbezier(3,-5)(3,-5)(-5,-5)
\put(-17,-12){$\cdots$} \put(-20,-16.5){$\underbrace{\hspace{1cm}}$}
\put(-13,-19.5){\makebox(0,0)[cc]{$_{n-\alpha-5}$}}
\put(-13,-25){\makebox(0,0)[cc]{ $M_{1}^{\prime}(n,\alpha)$}}

\put(27,0){\circle*{1}}\put(32,8){\circle*{1}}
\put(32,-8){\circle*{1}} \put(35,0){\circle*{1}}
\put(40,8){\circle*{1}} \put(40,-8){\circle*{1}}
\put(45,0){\circle*{1}}
\put(43,10){\circle*{1}}\put(53,10){\circle*{1}}
\put(43,-9){\circle*{1}}\put(53,-9){\circle*{1}}
\put(43,-18){\circle*{1}}\put(53,-18){\circle*{1}}

\qbezier(45,0)(45,0)(35,0)
\qbezier(45,0)(45,0)(40,8)\qbezier(45,0)(45,0)(40,-8)
\qbezier(45,0)(45,0)(43,10) \qbezier(45,0)(45,0)(53,10)
\qbezier(45,0)(45,0)(43,-9)\qbezier(45,0)(45,0)(53,-9)

\qbezier(27,0)(27,0)(35,0)\qbezier(40,-8)(40,-8)(35,0)\qbezier(40,8)(40,8)(35,0)
\qbezier(43,-18)(43,-18)(43,-9)\qbezier(53,-18)(53,-18)(53,-9)
\qbezier(32,8)(32,8)(40,8)\qbezier(32,-8)(32,-8)(40,-8)
\put(45,5){$\cdots$} \put(43,10.5){$\overbrace{\hspace{1cm}}$}
\put(49,13.5){\makebox(0,0)[cc]{$_{2\alpha-n+1}$}}

\put(46,-14){$\cdots$} \put(43,-18.5){$\underbrace{\hspace{1cm}}$}
\put(49,-21.5){\makebox(0,0)[cc]{$_{n-\alpha-4}$}}
\put(43,-26){\makebox(0,0)[cc]{ $M_{1}(n,\alpha)$}}
\put(20,-34){\makebox(0,0)[cc]{Fig. 4 \quad the graphs
$M_{1}^{\prime}(n,\alpha)$ and $M_{1}(n,\alpha)$}}

\end{picture}
\end{center}
\vspace{3cm}

\begin{lemma}
\label{juti2} Let $M_{1}(n,\alpha)$ and $M(n,\alpha)$ be the graphs
as shown in Fig. 3 and Fig.4. Then
$\rho(M_{1}(n,\alpha))<\rho(M(n,\alpha))$.
\end{lemma}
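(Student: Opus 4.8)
The plan is to follow the computation-and-comparison route of Lemma~\ref{juti1}. First I would apply Lemma~\ref{formular1} to both graphs and cancel the common ``tree factor'' (a product all of whose roots lie in $\{0,\pm1\}$). For $M(n,\alpha)$ this gives, as stated in the introduction, that $\rho(M(n,\alpha))$ is the largest root of the monic quartic
$$g(x)=x^{4}-(\alpha+3)x^{2}-4x+(2\alpha-n+1),$$
while for $M_{1}(n,\alpha)$ the analogous (longer) computation---$M_{1}(n,\alpha)$ has eight vertex symmetry classes---gives that $\rho(M_{1}(n,\alpha))$ is the largest root of the monic octic
$$h(x)=\bigl(x^{4}-\alpha x^{2}+2\alpha-n+1\bigr)\bigl(x^{4}-4x^{2}+1\bigr)-x^{2}\bigl(x^{2}+2x-1\bigr)^{2}.$$
The algebraic heart of the proof is then the identity
$$h(x)=g(x)\bigl(x^{4}-4x^{2}+1\bigr)+2x(x+2)\bigl(x^{4}-2x^{3}-3x^{2}+1\bigr),$$
which is verified by expanding both sides.

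Second, I would show that $\rho(M_{1}(n,\alpha))>3$; note that the bound $\rho\ge\sqrt{\Delta}$ of Lemma~\ref{bound} does not suffice here, since $\Delta(M_{1}(n,\alpha))=\alpha$ can be small. Evaluating the identity at $x=3$ gives $h(3)=46\,g(3)+30$ with $g(3)=43-7\alpha-n$. Since $\alpha\ge\frac{n-1}{2}$ and $n\ge10$ force $\alpha\ge5$, hence $7\alpha+n\ge45$, we get $g(3)\le-2$ and so $h(3)\le-62<0$. As $h$ is monic it therefore has a root larger than $3$; since this root is at most the largest root $\rho(M_{1}(n,\alpha))$, we conclude $\rho(M_{1}(n,\alpha))>3$.

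Finally, put $\mu=\rho(M_{1}(n,\alpha))$. Both $x^{4}-4x^{2}+1$ and $x^{4}-2x^{3}-3x^{2}+1$ are positive and increasing on $[3,\infty)$ (their values at $x=3$ being $46$ and $1$), hence positive at $\mu>3$. Substituting $x=\mu$ into the identity and using $h(\mu)=0$ yields
$$g(\mu)\bigl(\mu^{4}-4\mu^{2}+1\bigr)=-2\mu(\mu+2)\bigl(\mu^{4}-2\mu^{3}-3\mu^{2}+1\bigr)<0,$$
so $g(\mu)<0$. Since $g$ is monic with largest root $\rho(M(n,\alpha))$, the inequality $g(\mu)<0$ forces $\rho(M(n,\alpha))>\mu=\rho(M_{1}(n,\alpha))$, which is the assertion of the lemma. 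The step I expect to be the real obstacle is the two applications of Lemma~\ref{formular1}, and in particular isolating the octic $h$ together with its common factor for $M_{1}(n,\alpha)$; once $g$, $h$ and the displayed identity are available, the rest is the same ``evaluate past the trivial roots, then push through the factorisation'' device already used in Lemma~\ref{juti1}.
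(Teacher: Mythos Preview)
Your approach coincides with the paper's: both reduce $\Phi(M)$ and $\Phi(M_1)$ to a quartic $f$ (your $g$) and an octic $f_1$ (your $h$), write the octic as $(\text{degree-4 multiplier})\cdot f+\text{remainder}$, and evaluate at $\mu=\rho(M_1)$ to force $f(\mu)<0$. The only difference is the choice of multiplier. The paper uses $(x^2-1)^2$---the canonical choice, since it is exactly the ratio of the tree-factors $(x^2-1)^{n-\alpha-2}$ and $(x^2-1)^{n-\alpha-4}$ in $\Phi(M)$ and $\Phi(M_1)$---obtaining
\[
f_1(x)-(x^2-1)^2 f(x)=2x\bigl[(\alpha-4)x^3-2x^2+(n-2\alpha)x+2\bigr],
\]
whose bracket is positive once $\rho>2$ and $\alpha\ge5$; both follow immediately from $\rho\ge\sqrt{\Delta(M_1)}=\sqrt{\alpha}\ge\sqrt5$, so no separate lower bound on $\rho(M_1)$ is needed. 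Your multiplier $x^4-4x^2+1$ yields the pleasant parameter-free remainder $2x(x+2)(x^4-2x^3-3x^2+1)$, but at the cost of the extra evaluation argument for $\rho(M_1)>3$. (As a side note, your expanded $h$ and the paper's $f_1$ differ by a single $-2x$ term; one of the two Lemma~\ref{formular1} computations has an arithmetic slip that is worth re-checking, though it does not affect the method.)
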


\begin{proof}
 Let
\begin{align}
\label{fuction for A} f(x)=x^4-(\alpha+3)x^2-4x+(2\alpha-n+1).
\end{align}By using Lemma \ref{formular1} and tedious calculations we have
\begin{align}
\label{poly for A}
\Phi(M(n,\alpha);x)=x^{2\alpha-n}(x^2-1)^{n-\alpha-2}f(x),
\end{align}
and $\rho(M(n,\alpha))$ is the largest root of the equation
 $f(x)=0$.
 Let
$$ f_{1}(x)=x^8-(\alpha+5)x^6-4x^5-
(n-6\alpha)x^4+4x^3+(4n-9\alpha-5)x^2-2x-(n-2\alpha-1).
$$
We have
$$
\Phi(M_{1}(n,\alpha);x)=x^{2\alpha-n}(x^2-1)^{n-\alpha-4}f_{1}(x),
$$
 and $\rho(M_{1}(n,\alpha))$ is the largest root of the equation
 $f_{1}(x)=0$. It may be verified that
\begin{align}
\label{relation for 1}
f_{1}(x)-(x^2-1)^{2}f(x)=2x[(\alpha-4)x^{3}-2x^{2}+(n-2\alpha)x+2].
\end{align}

For $M_{1}(n,\alpha)$ when $n\geq 10$ we have $\alpha\geq 5$, and
write $\rho(M_{1}(n,\alpha))=\rho$, then $\rho>2$. Then from
(\ref{relation for 1}) we have
\begin{eqnarray*}
-\frac{(\rho^2-1)^{2}}{2\rho}f(\rho)
 &=&(\alpha-4)\rho^{3}-2\rho^{2}+(n-2\alpha)\rho+2\\
&>&(2\alpha-10)\rho^2+(n-2\alpha)\rho+2\\
&>&(n+2\alpha-20)\rho+2>0.
  \end{eqnarray*}
Thus  $f(\rho)<0$, then the largest root of equation $f(x)=0$ is
larger than $\rho$, i.e., $\rho(M(n,\alpha))>\rho(M_{1}(n,\alpha))$.
\end{proof}

\begin{lemma}
\label{largest in 1}
 Graph $M_{1}^{\prime}(n,\alpha)$ alone maximizes the spectral
radius among all the graphs in ${\cal B}_{1}(n,\alpha,\alpha)$.
\end{lemma}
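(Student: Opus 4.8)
The plan is to characterize the extremal graph $G^{*}$ in ${\cal B}_{1}(n,\alpha,\alpha)$ by repeatedly applying the edge-shifting operation of Lemma~\ref{deprive l} together with the path-grafting operation of Lemma~\ref{grafting}, exactly in the style of the proof of Theorem~3.2. First I would fix $G^{*}$ with maximal spectral radius in ${\cal B}_{1}(n,\alpha,\alpha)$ and let $x$ be its Perron vector. Since $G^{*}\in{\cal B}_{1}(n)$, its base $\widehat{G^{*}}$ is an $\infty$-graph $B(p,\ell,q)$ with trees attached; denote the joining path by $v_{1}\cdots v_{\ell}$. The first batch of claims would show the base is as small as possible: using Lemma~\ref{deprive l} (comparing the Perron weight of an interior cycle vertex $w$ with that of $v_{1}$ or $v_{\ell}$, and shifting the pendant-bearing edges) one forces every vertex of $V_{c}(G^{*})\setminus\{v_{1},v_{\ell}\}$ to have degree $2$; then an analogous contraction of a long cycle forces $p=q=3$; and a shift along the connecting path forces $\ell=1$, i.e.\ $\widehat{G^{*}}=B(3,1,3)$ with the single degree-$4$ vertex $v$. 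One must check at each step that the shifted graph stays in ${\cal B}_{1}(n,\alpha,\alpha)$: it remains bicyclic of type ${\cal B}_{1}$, and since we only move pendant-adjacent subtrees onto vertices that still keep a pendant neighbour, the property ``every non-pendant vertex has a pendant neighbour'' is preserved, hence $\alpha$ is unchanged.

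Next I would analyze the trees hanging off $v$ and off the four cycle vertices other than $v$. By Lemma~\ref{deprive l}, comparing coordinates, one shows at most one non-central vertex can have degree $>2$, and in fact all the ``mass'' concentrates at $v$: every attached tree should be a path, and by Lemma~\ref{grafting} (which strictly increases $\rho$ when a longer and shorter pendant path at a vertex are replaced by two paths of lengths differing by less) all pendant paths at $v$ should have length $1$ or $2$, differing by at most one. The constraint $\alpha(G^{*})=\alpha$ with $\alpha$ pendant vertices and $n$ vertices then pins down the multiplicities: one needs $2\alpha-n+1$ pendant paths of length $1$ and $n-\alpha-5$ pendant paths of length $2$ attached at $v$ (after accounting for the four cycle vertices $w,w',u$ and their pendant neighbours), which is precisely the graph $M_{1}^{\prime}(n,\alpha)$ of Fig.~4. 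I would verify the independence number of $M_{1}^{\prime}(n,\alpha)$ equals $\alpha$ directly (the pendant vertices together with one endpoint of each length-$2$ path and one vertex of each of the two remaining cycle-edges form a maximum independent set), and check it lies in ${\cal B}_{1}(n,\alpha,\alpha)$.

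The subtle point, and the main obstacle, is handling the vertices $w,w',u$ on the two triangles: Claim~1 above only forces degree-$2$ for cycle vertices \emph{different from the cut vertices}, but in $B(3,1,3)$ there is a single cut vertex $v$ of degree $4$, and each triangle has two further vertices. Because $G^{*}\in{\cal B}_{1}(n,\alpha,\alpha)$, every non-pendant vertex needs a pendant neighbour, so each of $w,w',u$ (the non-$v$ triangle vertices that are not themselves pendant) must carry at least one pendant vertex; I must argue that it carries \emph{exactly} one pendant and nothing else, which again follows from Lemma~\ref{deprive l} by comparing $x_{w}$ (etc.) with $x_{v}$ and noting $x_{v}>x_{w}$ because $v$ has strictly larger degree and sits more centrally --- but one has to phrase this carefully since moving a pendant off $w$ might destroy the ${\cal B}_{1}(n,\alpha,\alpha)$ membership. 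The resolution is that when we shift subtrees hanging at $w$ (other than its one mandatory pendant) onto $v$, the vertex $w$ keeps its pendant neighbour, so the class is preserved, and strict inequality of Perron coordinates gives $\rho(G')>\rho(G^{*})$, a contradiction. Once all these reductions are in place, $G^{*}$ is forced to be $M_{1}^{\prime}(n,\alpha)$, with uniqueness coming from the strictness in Lemmas~\ref{deprive l} and \ref{grafting}.
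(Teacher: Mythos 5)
Your overall strategy (take an extremal $G^{*}$, use the Perron vector and Lemma~\ref{deprive l} to pin down the base, then count) is the same as the paper's, but two of your concrete steps do not work as stated. First, your opening claim that every vertex of $V_{c}(G^{*})\setminus\{v_{1},v_{\ell}\}$ has degree $2$ is false in the class ${\cal B}_{1}(n,\alpha,\alpha)$: membership forces every non-pendant vertex, in particular every cycle vertex, to have a pendant neighbour, so all cycle vertices have degree at least $3$; indeed the extremal graph $M_{1}^{\prime}(n,\alpha)$ itself has its four non-central triangle vertices of degree $3$. Moreover the shift you propose to prove it (moving the pendant-bearing edges of a cycle vertex $w$ onto $v_{1}$) strips $w$ of its pendant neighbour, so the resulting graph leaves ${\cal B}_{1}(n,\alpha,\alpha)$ (its independence number need no longer equal the number of pendant vertices), and maximality of $G^{*}$ inside the class yields no contradiction. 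Your own last paragraph contradicts this claim when you argue that $w,w^{\prime},u$ must each carry a pendant, so the write-up is internally inconsistent. The paper never makes such a claim: its Claim~1 is $\ell=1$ (it moves the whole second cycle from $v_{\ell}$ to $v_{1}$, which is legal because $v_{\ell}$ keeps its pendant), Claim~2 is $p=q=3$, Claim~3 bounds the degrees \emph{outside} $V_{c}(G^{*})$, and only then does one show the cycle vertices other than $v$ have degree exactly $3$.

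Second, at the acknowledged ``subtle point'' you resolve the comparison of coordinates by asserting $x_{v}>x_{w}$ ``because $v$ has strictly larger degree and sits more centrally.'' That is not a justification: Lemma~\ref{deprive l} requires knowing which coordinate is larger, and no lemma in the paper lets you read this off from degrees. The paper's device is a two-case shift: if $x_{v}\geq x_{w}$ move the surplus neighbours of $w$ onto $v$ (keeping $w$ its one pendant), while if $x_{w}\geq x_{v}$ move instead two cycle edges at $v$ (e.g.\ the edges to the other triangle) onto $w$; either move stays inside ${\cal B}_{1}(n,\alpha,\alpha)$ and strictly increases $\rho$, giving the contradiction. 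Without the second case your argument is incomplete. A smaller point: Lemma~\ref{grafting} is neither needed nor correctly applied here — attached paths have length at most $2$ automatically, because an interior vertex of a longer path would be a non-pendant vertex without a pendant neighbour — and the multiplicities $2\alpha-n+1$ and $n-\alpha-5$ then follow from the counting you do carry out.
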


\begin{proof}
Suppose $G^{*}$ is a graph with maximal  spectral radius among the
graphs in ${\cal B}_{1}(n,\alpha,\alpha)$. Write
$\widehat{G^{*}}=B(p,\ell,q)$. Let $x$ be the Perron vector of
$G^{*}$. Now we will prove some properties for $G^{*}$.

\medskip
 {\bf Claim 1.} $\ell=1$.
\medskip

{\bf Proof of Claim 1.} Suppose to the contrary that $\ell\geq2$,
and $v_{1}$, $v_{\ell}$ are the vertices of $G^{*}$ with
$d_{\widehat{G^{*}}}(v_{1})=d_{\widehat{G^{*}}}(v_{\ell})=3$.
Without loss of generality assume that $x_{v_{1}}\geq x_{v_{\ell}}$.
Set
$$N(v_{\ell})=\{v_{\ell}^{\prime},v_{\ell}^{\prime\prime},v_{\ell1}, \cdots, v_{\ell s}\},$$
where $d(v_{\ell}^{\prime})=1$, and $v_{\ell}^{\prime\prime}$ is the
neighbour of $v_{\ell}$ lying on the path between  $v_{1}$ and
$v_{\ell}$. Then $s\geq 2$ follows from the fact that
$d_{G^{*}}(v_{\ell})\geq 4$. Set
$$G^{\prime}=G^{*}-v_{\ell}v_{\ell1}- \cdots- v_{\ell}v_{\ell s}+v_{1}v_{\ell1}+ \cdots+v_{1}v_{\ell s}.$$
Then  $G^{\prime}$ is in ${\cal B}_{1}(n)$ with  $\alpha$ pendant
vertices, and every non-pendant vertex of $G^{\prime}$ has  at least
one pendant neighbour. Thus $G^{\prime}$ is also in ${\cal
B}_{1}(n,\alpha,\alpha)$. While we have
$\rho(G^{\prime})>\rho(G^{*})$. This contradicts the definition of
$G^{*}$.

\medskip
{\bf  Claim 2.} $p=q=3$.
\medskip

{\bf Proof of Claim 2.} Suppose to the contrary that $p\geq4$, and
$uv$ is an edge of the cycle $C_{p}$. Without loss of generality
assume  that $x_{u}\geq x_{v}$. Let $w\,(w\not =u)$ be the neighbour
of $v$ on the cycle $C_{p}$, then $wu\not \in E(G^{*})$. Set
$G^{\prime}=G^{*}-vw+uw.$ Then $G^{\prime}$ is also in ${\cal
B}_{1}(n,\alpha,\alpha)$, while $\rho(G^{\prime})>\rho(G^{*})$.

 Thus from Claim 1 and Claim 2 we have
$\widehat{G^{*}}=B(3,1,3)$. Denote by $v$ the vertex of $G^{*}$ with
$d_{\widehat{G^{*}}}(v)=4$.

\medskip
{\bf  Claim 3.} Every  vertex outside of  $V_{c}(G^{*})$ has degree
at most 2.
\medskip

{\bf Proof of Claim 3.} Suppose to the contrary that there exists a
vertex, say $w$, such that $w\not \in V_{c}(G^{*})$ with
$d_{G^{*}}(w)\geq3$. Let  $w^{\prime}$ be a non-pendant neighbour of
$w$, which does not lie on any path between $v$ and $w$. Let
$v^{\prime},v^{\prime\prime}$ be two neighbours of $v$ on some cycle
of $G^{*}$, and $v^{\prime},v^{\prime\prime}$ do not lie on any path
between $v$ and $w$. Set
$$G^{\prime}= \left\{\begin{array}{ll}
G^{*}-vv^{\prime}-vv^{\prime\prime}+wv^{\prime}+wv^{\prime\prime}, &\, \mbox{if}\, \, x_{w}\geq x_{v}; \\
G^{*}-ww^{\prime}+vw^{\prime}, &\, \mbox{if}\, \, x_{v}> x_{w}.
\end{array}
\right.$$ Then we obtain a graph also in ${\cal
B}_{1}(n,\alpha,\alpha)$ with larger spectral radius than that of
$G^{*}$.

By using the similar arguments as the proof of Claim 3 we may deduce
that every vertex in $V_{c}(G^{*})\backslash\{v\}$ has degree 3.
Thus combining the above results we have
$G^{*}=M_{1}^{\prime}(n,\alpha)$.
\end{proof}

By using the similar proof as that of Lemma \ref{largest in 1}, we
may obtain the following result.

\begin{lemma}
\label{largest in 2}
 Graph $M_{1}(n,\alpha)$ alone maximizes the spectral
radius among all the graphs in ${\cal B}_{2}(n,\alpha,\alpha)$.
\end{lemma}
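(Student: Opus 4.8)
The plan is to mimic, step by step, the argument that was just carried out for Lemma~\ref{largest in 1}, replacing the $\infty$-graph base by the $\theta$-graph base. Let $G^{*}$ be a graph of maximal spectral radius in ${\cal B}_{2}(n,\alpha,\alpha)$, so $\widehat{G^{*}}=P(\ell,p,q)$ is a $\theta$-graph, possibly with trees attached, with $0\le \ell\le p\le q$ and at most one of them $0$. Let $u,v$ be the two vertices of degree $3$ in $\widehat{G^{*}}$ (the two ``branch vertices'' of the $\theta$-graph), and let $x$ be the Perron vector of $G^{*}$. The goal is to force $\widehat{G^{*}}$ down to the smallest $\theta$-graph whose bipartiteness parity is compatible with membership in ${\cal B}_{2}(n,\alpha,\alpha)$, and then to force all attached trees to be pendant edges at the branch vertices, which pins down $G^{*}=M_{1}(n,\alpha)$.

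The key steps, in order: \emph{Step 1 (shortening the interior of the $\theta$).} As in Claim~1 of Lemma~\ref{largest in 1}, assume WLOG $x_{u}\ge x_{v}$; consider the neighbours of $v$ that are neither its neighbour along one of the three internal $u$--$v$ paths that must be kept nor a pendant vertex guaranteeing the ``every non-pendant vertex has a pendant neighbour'' property. Use Lemma~\ref{deprive l} to move all such edges from $v$ to $u$; this strictly increases $\rho$ and stays in ${\cal B}_{2}(n,\alpha,\alpha)$ (one checks the independence number and the pendant-neighbour condition are preserved). This shows $v$ has no ``extra'' branches, and symmetrically controls $u$. \emph{Step 2 (making the $\theta$ as small as possible).} As in Claim~2, if any of the three $u$--$v$ paths has an internal vertex, pick an edge $ab$ on that path with, say, $x_{a}\ge x_{b}$, and let $w\ne a$ be the other neighbour of $b$ on that same path; since $wa\notin E(G^{*})$, the switch $G^{*}-bw+aw$ lies in ${\cal B}_{2}(n,\alpha,\alpha)$ and by Lemma~\ref{deprive l} has larger spectral radius, a contradiction. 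This collapses $\widehat{G^{*}}$ to the minimal $\theta$-graph permitted; the parity constraint coming from $\alpha(G^{*})=\alpha$ (via Lemma~\ref{bound for elfa} and the analysis of which vertices of $V_{c}(G^{*})$ can lie in an independent set) determines exactly which small $\theta$-graph this is — precisely the base of $M_{1}(n,\alpha)$ in Fig.~4, with the unique degree-$4$ vertex. \emph{Step 3 (trees are pendant edges at the branch vertices).} As in Claim~3, suppose some vertex $w\notin V_{c}(G^{*})$ has $d_{G^{*}}(w)\ge 3$; let $w'$ be a non-pendant neighbour of $w$ off every path from $w$ to the degree-$4$ vertex $v$, and let $v',v''$ be two neighbours of $v$ on a cycle, off every such path. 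Define $G'=G^{*}-vv'-vv''+wv'+wv''$ if $x_{w}\ge x_{v}$ and $G'=G^{*}-ww'+vw'$ otherwise; in either case Lemma~\ref{deprive l} gives $\rho(G')>\rho(G^{*})$ and $G'$ stays in the class. Hence every vertex off the cycles has degree $\le 2$, and the same kind of switch forces every cycle vertex other than $v$ to have degree exactly $3$ (each carries one pendant neighbour, as required by membership in ${\cal B}_{2}(n,\alpha,\alpha)$). Together with the path-length balancing from Lemma~\ref{grafting} applied to the paths hanging at $v$, this yields $G^{*}=M_{1}(n,\alpha)$.

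The main obstacle I expect is \emph{bookkeeping of the independence number and the pendant-neighbour condition under each local switch}, rather than any spectral difficulty: the $\theta$-graph has two branch vertices and three internal paths whose parities jointly constrain $\alpha$, so I must verify in Step~1 and Step~2 that each switch preserves both $\alpha(G^{*})=\alpha$ and the defining property of ${\cal B}_{2}(n,\alpha,\alpha)$ (that every non-pendant vertex retains a pendant neighbour). In particular, when relocating edges incident to $v$ in Step~1 I must be careful to leave $v$ with a pendant neighbour if $v$ remains non-pendant, which is why the vertices $v_{\ell}^{\prime}$-type pendants must be excluded from the set being moved, exactly as in Claim~1 of Lemma~\ref{largest in 1}. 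Once these invariants are checked, the chain of strict inequalities from Lemma~\ref{deprive l} (with Lemma~\ref{grafting} for the final path-balancing) forces uniqueness, and the identification $G^{*}=M_{1}(n,\alpha)$ is then immediate from Fig.~4. I would remark that the argument is verbatim the one for Lemma~\ref{largest in 1} with $B(p,\ell,q)$ replaced by $P(\ell,p,q)$, so in the write-up it suffices to point out the two or three places where the $\theta$-structure (two degree-$3$ vertices, three internal paths) changes the choice of vertices in the switches, and otherwise refer back.
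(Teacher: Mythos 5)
Your overall strategy is exactly the paper's: the paper proves this lemma by simply repeating the argument of Lemma \ref{largest in 1} with the $\infty$-graph base replaced by a $\theta$-graph base, which is what you propose, and your Steps 1 and 3 are the right adaptations of Claims 1 and 3. However, your Step 2 is misstated in a way that matters. The assertion that a maximal $G^{*}$ can have no internal vertex on any of the three $u$--$v$ paths is false: the extremal base is $P(0,1,1)$, i.e. $K_{4}$ minus an edge, and two of its three $u$--$v$ paths do contain an internal vertex. Your switch $G^{*}-bw+aw$ is blocked precisely when the path in question is $u$--$b$--$v$ of length $2$ and the direct edge $uv$ is already present, for then $w=v\in N(a)$ and Lemma \ref{deprive l} does not apply; it is this adjacency (equivalently, the impossibility of creating a multiple edge in a simple graph), and not any parity constraint, that stops the shortening at path lengths $1,2,2$. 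The appeal to Lemma \ref{bound for elfa} is out of place here: that lemma concerns $\alpha=\frac{n-2}{2}$ and $\infty$-graph bases, whereas in this section $\alpha\geq\frac{n-1}{2}$ and no parity issue arises. Also, since $b$ in your switch must be an internal vertex of the path, you cannot always choose the orientation "$x_{a}\geq x_{b}$"; as in Claim 2 of Lemma \ref{largest in 1} you need the two-sided comparison (compare the Perron coordinates of two candidate vertices and move a third vertex's edge accordingly), checking in each case that the new base is still a $\theta$-graph so that the competitor stays in ${\cal B}_{2}(n,\alpha,\alpha)$.

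A second, smaller point: your description of the final structure imports $\infty$-graph features that do not exist here. The $\theta$-base has no vertex of base degree $4$; the distinguished vertex of $M_{1}(n,\alpha)$ is a branch vertex of base degree $3$ carrying the $2\alpha-n+1$ pendant edges and the $n-\alpha-4$ pendant paths of length $2$, and the \emph{other} branch vertex ends up with degree $4$ (base degree $3$ plus its one pendant neighbour), so "every cycle vertex other than $v$ has degree exactly $3$" is not the correct conclusion. The claim you actually need is that every base vertex other than the distinguished branch vertex carries exactly one pendant neighbour and nothing else, which follows from your Step 1/Step 3 switches together with the pendant-neighbour condition (no grafting is really needed: since vertices off the base have degree at most $2$ and every non-pendant vertex must have a pendant neighbour, the attached trees at the distinguished vertex can only be pendant edges or paths of length $2$, and their numbers are then forced by $n$ and the pendant count $\alpha$). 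With these corrections your argument goes through and coincides with the paper's intended proof.
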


\begin{theorem}
\label{Bnaa} Let $G$ be any  graph in
 ${\cal B}(n,\alpha,\alpha)$. Then
 $\rho(G)<\rho(M(n,\alpha))$.
\end{theorem}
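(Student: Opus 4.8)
The plan is to reduce the statement to a handful of explicit graph comparisons, using the two extremal graphs already identified in Lemmas \ref{largest in 1} and \ref{largest in 2}. Since ${\cal B}(n,\alpha,\alpha)={\cal B}_1(n,\alpha,\alpha)\cup{\cal B}_2(n,\alpha,\alpha)$, every $G\in{\cal B}(n,\alpha,\alpha)$ satisfies
$$\rho(G)\le\max\{\rho(M_1^{\prime}(n,\alpha)),\ \rho(M_1(n,\alpha))\}$$
by those two lemmas. Hence it suffices to prove the two strict inequalities $\rho(M_1^{\prime}(n,\alpha))<\rho(M(n,\alpha))$ and $\rho(M_1(n,\alpha))<\rho(M(n,\alpha))$. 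The second of these is exactly Lemma \ref{juti2}, so the whole theorem reduces to establishing $\rho(M_1^{\prime}(n,\alpha))<\rho(M(n,\alpha))$, after which $\rho(G)\le\max\{\rho(M_1^{\prime}(n,\alpha)),\rho(M_1(n,\alpha))\}<\rho(M(n,\alpha))$ for every $G$ in the class.

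To prove $\rho(M_1^{\prime}(n,\alpha))<\rho(M(n,\alpha))$ I would follow the pattern of Lemmas \ref{juti1} and \ref{juti2}. First, apply Lemma \ref{formular1} repeatedly to $M_1^{\prime}(n,\alpha)$ --- peeling off the pendant vertices, then the attached paths of length two, and finally cutting the two triangles of $B(3,1,3)$ --- to obtain a factorization $\Phi(M_1^{\prime}(n,\alpha);x)=x^{2\alpha-n}(x^2-1)^{s}\tilde f(x)$, where $s$ is an explicit nonnegative integer and $\tilde f$ is a polynomial of fixed, small degree whose largest root equals $\rho(M_1^{\prime}(n,\alpha))$. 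Then verify the polynomial identity that writes $\tilde f(x)-(x^2-1)^{t}f(x)$, with $f$ as in (\ref{fuction for A}) and the appropriate exponent $t$, as $x$ times a low-degree polynomial whose coefficients are affine in $n$ and $\alpha$ (just as in (\ref{relation for 1})). Because the central vertex of $M_1^{\prime}(n,\alpha)$ has degree $\alpha\ge5$ when $n\ge10$, Lemma \ref{bound} gives $\rho:=\rho(M_1^{\prime}(n,\alpha))\ge\sqrt{\alpha}>2$; substituting $x=\rho$ into the identity and using $\tilde f(\rho)=0$ together with $\rho>2$ and the admissible ranges $n-\alpha-5\ge0$, $2\alpha-n+1\ge0$, $\alpha\ge5$ should force $f(\rho)<0$. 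Since $f(x)\to+\infty$ as $x\to\infty$ and $\rho(M(n,\alpha))$ is the largest root of $f$, this yields $\rho(M(n,\alpha))>\rho=\rho(M_1^{\prime}(n,\alpha))$, as required. The finitely many boundary values of $\alpha$ for which the paths-of-length-two part of $M_1^{\prime}(n,\alpha)$ degenerates are treated by the same computation applied to the degenerate form of the graph.

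The main obstacle is purely the bookkeeping: carrying out the characteristic-polynomial computation for $M_1^{\prime}(n,\alpha)$ cleanly, and then pinning down the exact identity $\tilde f-(x^2-1)^{t}f=x\cdot(\text{low-degree polynomial})$. Once that identity is in hand, the sign estimate on $(2,\infty)$ is short --- the same kind of one-line monomial-by-monomial lower bound used in Lemmas \ref{juti1} and \ref{juti2}. An alternative that sidesteps most of the algebra is to repeatedly relocate, via Lemma \ref{deprive l}, the pendant edges sitting on the triangle vertices of $M_1^{\prime}(n,\alpha)$ onto its central degree-$\alpha$ vertex (after checking that the Perron coordinate of the center dominates those of the triangle vertices), each move strictly increasing $\rho$; however, such a relocation changes the independence number of the graph, so one would then have to bound the spectral radius of the resulting graph by $\rho(M(n,\alpha))$ separately --- which makes this route more delicate than the direct polynomial comparison.
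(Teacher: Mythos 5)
Your reduction is exactly the paper's: Lemmas \ref{largest in 1} and \ref{largest in 2} bound $\rho(G)$ by $\max\{\rho(M_1^{\prime}(n,\alpha)),\rho(M_1(n,\alpha))\}$, and Lemma \ref{juti2} disposes of $M_1(n,\alpha)$. Where you diverge is the remaining comparison involving $M_1^{\prime}(n,\alpha)$: you propose to compute $\Phi(M_1^{\prime}(n,\alpha);x)$ via Lemma \ref{formular1} and run the same sign analysis as in Lemmas \ref{juti1} and \ref{juti2}, whereas the paper never touches that characteristic polynomial. Its observation is that $M_1^{\prime}(n,\alpha)$ is symmetric with respect to its two pendant-decorated triangles, so the Perron vector satisfies $x_w=x_u$, and a single edge relocation $M_1^{\prime}(n,\alpha)-ww^{\prime}+uw^{\prime}$ produces exactly $M_1(n,\alpha)$; Lemma \ref{deprive l} then gives $\rho(M_1^{\prime}(n,\alpha))<\rho(M_1(n,\alpha))$ outright, so only the one polynomial comparison of Lemma \ref{juti2} is ever needed. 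Your closing ``alternative'' comes close to this --- you consider moving pendant edges of $M_1^{\prime}$ by Lemma \ref{deprive l} and rightly worry that the independence number changes --- but you miss that the right single move lands you on $M_1(n,\alpha)$ itself, whose independence number is irrelevant because Lemma \ref{juti2} already compares it to $M(n,\alpha)$ directly. Your route would also work, and in that sense the proof plan is sound, but note that as written it is incomplete at its decisive point: the factorization of $\Phi(M_1^{\prime}(n,\alpha);x)$ and the identity $\tilde f(x)-(x^2-1)^{t}f(x)=x\cdot(\text{low-degree polynomial})$ are only asserted to ``presumably'' yield $f(\rho)<0$; until that identity is actually produced and the sign estimate checked on $(2,\infty)$ (including the degenerate cases $2\alpha-n+1=0$ and $n-\alpha-5=0$ you mention), the inequality $\rho(M_1^{\prime}(n,\alpha))<\rho(M(n,\alpha))$ is not yet proved. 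The paper's symmetry argument buys you exactly this: it eliminates the heaviest computation of your plan at the cost of one line.
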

\begin{proof}
Let $x$ be the Perron vector of $M_{1}^{\prime}(n,\alpha)$, by
symmetry we have $x_{w}=x_{u}$, where $u,v$ are shown in Fig.4. It
is easy to see that
$M_{1}(n,\alpha)=M_{1}^{\prime}(n,\alpha)-ww^{\prime}+uw^{\prime}$.
Then $\rho(M_{1}(n,\alpha))>\rho(M_{1}^{\prime}(n,\alpha))$ follows
from Lemma \ref{deprive l}. Let $G$ be any  graph in
 ${\cal B}(n,\alpha,\alpha)$. Then by using Lemmas \ref{largest in 1}, \ref{largest in 2}
and  \ref{juti2} we have
 $$\rho(G)\leq
 \mbox{max}\{\rho(M_{1}^{\prime}(n,\alpha),\rho(M_{1}(n,\alpha))\}=\rho(M_{1}(n,\alpha))
 <\rho(M(n,\alpha)).$$
 Thus we have  $\rho(G)<\rho(M(n,\alpha))$ for any graph $G$  in
 ${\cal B}(n,\alpha,\alpha)$.
\end{proof}

\subsection{The  graphs  in  ${\cal B}(n,\alpha)$ with $\alpha-1$ pendant vertices}

Set
 $${\cal B}(n,\alpha,\alpha-1)=\{G \,|\, G \in {\cal B}(n,\alpha)
\,\, \mbox{and}\,\, G \,\, \mbox{contains}\,\, \alpha-1
\,\,\mbox{pendant vertices}\}.$$ In this section we will prove that
the spectral radii of the graphs in  ${\cal B}(n,\alpha,\alpha-1)$
are also less than that of $M(n,\alpha)$.  For a graph $G$ in ${\cal
B}(n,\alpha,\alpha-1)$ set $$V^{\prime}(G)=\{v\in V(G) \,|\,
d(v)\geq2 \,\,\mbox{and} \, v \,\mbox{has no pendant neighbour}\},$$
then $|V^{\prime}(G)|\geq 1$. Furthermore $|V^{\prime}(G)|\leq 3$,
for otherwise $\alpha-1$ pendant vertices along with two proper
vertices in $V^{\prime}(G)$ may form an independent set of $G$ with
cardinality $\alpha+1$. Similarly if $|V^{\prime}(G)|=2$, then the
two  vertices in $V^{\prime}(G)$ are incident. And if
$|V^{\prime}(G)|=3$, then the vertices in $V^{\prime}(G)$ lie on a
triangle.

\begin{lemma}
\label{description1}
 Let $G^{*}$ be a graph in ${\cal
B}(n,\alpha,\alpha-1)$ with maximal spectral radius. Then
$|V^{\prime}(G^{*})|\geq 2$, or $\rho(G^{*})<\rho(M(n,\alpha))$.
\end{lemma}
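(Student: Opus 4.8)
The plan is to argue by contradiction: assume $|V^{\prime}(G^{*})|=1$ and show that then $\rho(G^{*})$ cannot be maximal unless $G^{*}$ coincides with a graph already known to have spectral radius at most $\rho(M(n,\alpha))$. Write $V^{\prime}(G^{*})=\{v_{0}\}$, so that $v_{0}$ is the unique vertex of degree at least $2$ with no pendant neighbour, and let $x$ be the Perron vector of $G^{*}$. The key structural observation is that every non-pendant vertex other than $v_{0}$ has a pendant neighbour, so the "interior" of $G^{*}$ (the subgraph spanned by non-pendant vertices) is extremely restricted; in fact it must be the base $\widehat{G^{*}}$ together with $v_{0}$ possibly sitting on a short attached path. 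I would first use Lemma~\ref{deprive l} in the by-now-standard way to force $\widehat{G^{*}}=B(3,1,3)$ or $\widehat{G^{*}}=P(1,2,2)$-type bases: if some cycle has length $\geq 4$ or the connecting path has length $\geq 2$, one relocates an edge toward the Perron-heavier endpoint to strictly increase $\rho$ while staying inside ${\cal B}(n,\alpha,\alpha-1)$ (one must check the pendant-count and independence number are preserved, which they are because the move only redistributes non-pendant structure).

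Next I would pin down where $v_{0}$ can be. Since all non-pendant vertices except $v_{0}$ carry a pendant, and since $|V^{\prime}|=1$, the vertex $v_{0}$ is either on the base itself or is the single internal vertex of a path of length $2$ attached to the base (a longer pendant path would create a second pendant-free vertex or, via Lemma~\ref{grafting}, a graph of smaller spectral radius). In either case the resulting graph is one of a small explicit family. I would then either (i) identify that family member with $M(n,\alpha)$, $M_{1}(n,\alpha)$, or $M_{1}^{\prime}(n,\alpha)$ and invoke Lemma~\ref{juti2} together with Theorem~\ref{Bnaa} to conclude $\rho(G^{*})<\rho(M(n,\alpha))$; or (ii) when it is a genuinely new candidate, compute its characteristic polynomial via Lemma~\ref{formular1}, reduce it to a low-degree factor $g(x)$, and compare with $f(x)=x^{4}-(\alpha+3)x^{2}-4x+(2\alpha-n+1)$ of (\ref{fuction for A}) by showing $f(\rho(G^{*}))<0$ using $\rho(G^{*})\geq\sqrt{\Delta}>2$ from Lemma~\ref{bound}, exactly as in the proofs of Lemmas~\ref{juti1} and~\ref{juti2}.

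The main obstacle I expect is the bookkeeping in the second step: verifying that when $|V^{\prime}(G^{*})|=1$ the graph is forced into the short explicit list, because one has to simultaneously control (a) the base type, (b) the distribution of pendant edges so that exactly $v_{0}$ is pendant-free, and (c) the lengths of the non-pendant paths that $v_{0}$ might lie on — and each edge-relocation move used to simplify the graph must be checked not to accidentally create or destroy a pendant-free vertex, which would change $|V^{\prime}|$ and take us outside the class. Once the list is in hand, the spectral comparison is routine: it is the same $h-g$ or $f_{1}-(x^{2}-1)^{2}f$ type identity already used twice above, evaluated at a point known to exceed $2$. So the proof is: contradiction hypothesis $\Rightarrow$ base is $B(3,1,3)$-like via Lemma~\ref{deprive l} $\Rightarrow$ $v_{0}$ lies in a bounded region $\Rightarrow$ finitely many explicit graphs $\Rightarrow$ each has $\rho<\rho(M(n,\alpha))$ by polynomial comparison (using Lemma~\ref{bound}) or by reduction to Theorem~\ref{Bnaa}.
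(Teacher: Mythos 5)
Your plan is a different and much heavier route than the paper's, and as written it has real gaps. The paper does not classify $G^{*}$ at all: assuming $|V^{\prime}(G^{*})|=\{u\}$ has size one, it picks a neighbour $w$ of $u$ (after comparing Perron coordinates of two neighbours of $u$), moves \emph{all} pendant neighbours $w_{1},\dots,w_{s}$ of $w$ over to $u$, and observes a dichotomy: if $w$ thereby becomes pendant, every non-pendant vertex of the new graph $G^{\prime}$ has a pendant neighbour, so $G^{\prime}\in{\cal B}(n,\alpha,\alpha)$ and $\rho(G^{*})<\rho(G^{\prime})<\rho(M(n,\alpha))$ by Theorem \ref{Bnaa}; otherwise $G^{\prime}$ is still in ${\cal B}(n,\alpha,\alpha-1)$ with $\rho(G^{\prime})>\rho(G^{*})$ by Lemma \ref{deprive l}, contradicting the maximality of $G^{*}$. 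One local move settles the lemma; no determination of the base, no list of candidate graphs, no new characteristic polynomials.

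The concrete gaps in your version are these. First, the core of your argument --- ``$v_{0}$ lies in a bounded region, hence $G^{*}$ belongs to a small explicit family'' --- is never established and is not routine: with $|V^{\prime}(G^{*})|=1$ the graph can still carry arbitrarily long pendant-decorated trees, and forcing the extremal graph into a finite list requires a full Claim-by-Claim analysis of the kind in Lemma \ref{largest in 1}, which you only gesture at; the family itself is never exhibited, and the ``new candidate'' polynomial comparisons are never performed, so the conclusion is not actually derived. Second, your blanket justification that each edge-relocation ``preserves the pendant-count and independence number because the move only redistributes non-pendant structure'' is not a valid principle: such moves can both change $\alpha$ and create or destroy pendant-free vertices, and indeed the paper's own single move may change the pendant count --- that is exactly why its proof splits into the two cases $d_{G^{\prime}}(w)=1$ and $d_{G^{\prime}}(w)\geq 2$, the first of which is not a failure but the branch where Theorem \ref{Bnaa} finishes the argument. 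If you want to salvage your approach you must either verify class-membership after every move case by case, or, better, notice that you do not need the structure of $G^{*}$ at all: a single move toward the unique pendant-free vertex already yields the dichotomy above.
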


\begin{proof}
If $|V^{\prime}(G^{*})|\geq 2$, the proof is completed. Now suppose
to the contrary that $|V^{\prime}(G^{*})|=1$. Let $u$ be the vertex
in $V^{\prime}(G^{*})$, and $v$, $w$ are two neighbour of $u$. Let
$x$ be the Perron vector of $G^{*}$. Without loss of generality
assume that $x_{v}\geq x_{w}$. Let $w_{1}, \cdots, w_{s}\,(s\geq1)$
be all the pendant neighours of $w$. Set
$$G^{\prime}=G^{*}-ww_{1}- \cdots -ww_{s}+uw_{1}+ \cdots + uw_{s}.$$
If $d_{G^{\prime}}(w)=1$, then every non-pendant vertex of
$G^{\prime}$ has at least one pendant neighour, thus $G^{\prime} \in
{\cal B}(n,\alpha,\alpha)$, and
$$\rho(G^{*})<\rho(G^{\prime})\leq\rho(M(n,\alpha)).$$  If $d_{G^{\prime}}(w)\geq 2$, then
$G^{\prime}$ is also in ${\cal B}(n,\alpha,\alpha-1)$. While
$\rho(G^{\prime})>\rho(G^{*})$. This contradicts the definition of
$G^{*}$.
\end{proof}

\begin{lemma}
\label{D1}
 Let $G$ be a graph in ${\cal B}(n,\alpha,\alpha-1)$. If
$|V^{\prime}(G)|= 2$ and the vertices in $V^{\prime}(G)$ do not lie
on a triangle, then $\rho(G)<\rho(M(n,\alpha))$.
\end{lemma}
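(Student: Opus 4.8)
The plan is to take a graph $G \in {\cal B}(n,\alpha,\alpha-1)$ with $|V^{\prime}(G)|=2$ whose two special vertices do not lie on a triangle, and either transform it into a graph in ${\cal B}(n,\alpha,\alpha)$ or ${\cal B}(n,\alpha,\alpha-1)$ with strictly larger spectral radius (which, via Theorem \ref{Bnaa} or iteration, forces $\rho(G)<\rho(M(n,\alpha))$), or directly identify the finitely many extremal candidates and compare their characteristic polynomials with $f(x)$ from \eqref{fuction for A}. First I would write $V^{\prime}(G)=\{u,w\}$ with $uw\in E(G)$ (this incidence was already established in the preamble to this subsection), let $x$ be the Perron vector of $G$, and assume without loss of generality $x_u\ge x_w$. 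Since $w$ has no pendant neighbour, every neighbour of $w$ other than $u$ either lies on a cycle, lies on the path/tree structure toward $u$, or is itself a non-pendant vertex; the key point is that $w$'s other neighbours are not pendant, so moving them will not destroy the "$\alpha-1$ pendant vertices'' count in an uncontrolled way.

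The main reduction step is a grafting move in the spirit of Lemma \ref{deprive l}: take the neighbours of $w$ lying in $N(w)\setminus(N(u)\cup\{u\})$ that are \emph{not} needed to keep $w$ non-pendant-free, and re-attach them to $u$. Because $x_u \ge x_w$, Lemma \ref{deprive l} gives a strict increase in spectral radius, provided we can check that the resulting graph $G'$ still lies in ${\cal B}(n,\alpha,\alpha-1)$ (or lands in ${\cal B}(n,\alpha,\alpha)$, in which case Theorem \ref{Bnaa} finishes it immediately). One must be careful: the hypothesis that $u,w$ do not lie on a common triangle is exactly what guarantees that $w$ has a neighbour outside $N(u)\cup\{u\}$ available to move, so the move is non-trivial and the independence number is unchanged — if they shared a triangle, $w$'s "extra'' structure could be forced, and that is the case deliberately excluded here (and handled separately in a later lemma). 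After the move, $w$ should end up with degree at most $1$ or at most $2$; in the former case $G'\in{\cal B}(n,\alpha,\alpha)$ and we invoke Theorem \ref{Bnaa}, in the latter case $G'\in{\cal B}(n,\alpha,\alpha-1)$ with $|V^{\prime}(G')|\le 2$, and we either iterate or appeal to Lemma \ref{description1}.

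The hard part will be the bookkeeping that the transformed graph genuinely stays in the prescribed class: one has to verify simultaneously that $G'$ is still bicyclic (the move must not touch the two cycles in a way that merges or breaks them — here again the no-triangle hypothesis and the fact that $w\notin V_c$ or $w$'s cycle-neighbours are preserved is used), that $\alpha(G')=\alpha$ (the pendant vertices we created at $u$ replace ones destroyed at $w$, and no new large independent set appears because $u$ absorbs them), and that the pendant count is exactly $\alpha-1$ or $\alpha$ as claimed. A clean way to organize this is to case-split on whether $w\in V_c(G)$: if $w$ lies on a cycle, we instead rotate along the cycle or move the tree hanging off $w$ wholesale to $u$; if $w$ lies on a pendant path or tree, we move that whole subtree. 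In every subcase the conclusion is that the extremal graph of this type has spectral radius below $\rho(M(n,\alpha))$, either because it got pushed into ${\cal B}(n,\alpha,\alpha)$ (use Theorem \ref{Bnaa}) or into the $|V^{\prime}|=1$ situation (use Lemma \ref{description1}), completing the proof.
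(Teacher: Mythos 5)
Your overall strategy --- compare the Perron coordinates of the two vertices of $V^{\prime}(G)$, move neighbours of the smaller one to the larger one via Lemma \ref{deprive l}, and finish with Theorem \ref{Bnaa} --- is exactly the paper's approach, and the branch of your plan in which the moved-from vertex becomes pendant is essentially the whole proof. The gap lies in the other branch and in your choice of which neighbours to move. You propose to move only those neighbours of $w$ ``not needed to keep $w$ non-pendant-free'', so that $G^{\prime}$ may remain in ${\cal B}(n,\alpha,\alpha-1)$, and then to ``iterate or appeal to Lemma \ref{description1}''. Neither option closes the argument: Lemma \ref{description1} is a statement about the graph of \emph{maximal} spectral radius in ${\cal B}(n,\alpha,\alpha-1)$, not about your particular $G^{\prime}$ (which need not be that maximizer), and the iteration is not justified as written, because after the first move the Perron vector changes, so the inequality $x_{u}\geq x_{w}$ that licenses the next application of Lemma \ref{deprive l} must be re-established and may now point the other way.

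The fix, which is what the paper does, is to perform the move in one shot: with $V^{\prime}(G)=\{u,v\}$, $uv\in E(G)$ and $x_{u}\geq x_{v}$, move \emph{all} neighbours $v_{1},\dots,v_{s}$ of $v$ other than $u$ to $u$. The no-triangle hypothesis is used precisely here: it says $u$ and $v$ have no common neighbour, so every $v_{i}$ lies in $N(v)\setminus(N(u)\cup\{u\})$, Lemma \ref{deprive l} applies to the whole batch, and no multiple edge (nor a cycle of length $2$) can be created; its role is not merely to guarantee that \emph{some} movable neighbour exists, as you suggest. After this single move $v$ is pendant and attached to $u$, every non-pendant vertex of $G^{\prime}$ has a pendant neighbour, and $G^{\prime}$ is still connected and bicyclic with $\alpha$ pendant vertices and independence number $\alpha$; hence $G^{\prime}\in{\cal B}(n,\alpha,\alpha)$ and $\rho(G)<\rho(G^{\prime})\leq\rho(M(n,\alpha))$ by Lemma \ref{deprive l} and Theorem \ref{Bnaa}. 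The bookkeeping you worry about (bicyclicity, independence number, pendant count) is settled uniformly by this one move, with no case split on whether $w\in V_{c}(G)$ and no iteration.
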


\begin{proof}
 Suppose $u,v$ are the  two vertices in $V^{\prime}(G)$.
Let $x$ be the Perron vector of $G$. Without loss of generality
assume that $x_{u}\geq x_{v}$. Let $v_{1}, \cdots, v_{s}\,(s\geq1)$
be all the  neighours of $v$ different from $u$, then $v_{i} \not
\in N(u)$. Set
$$G^{\prime}=G-vv_{1}- \cdots - vv_{s}+uv_{1}+ \cdots + uv_{s}.$$
Then  every non-pendant vertex of  $G^{\prime}$ has at least one
pendant neighour, thus $G^{\prime} \in {\cal B}(n,\alpha,\alpha)$.
While from Lemma \ref{deprive l} and Theorem \ref{Bnaa} we have
$\rho(G)<\rho(G^{\prime})\leq\rho(M(n,\alpha)).$
\end{proof}

\vspace{1cm}
\begin{center}
\unitlength 1mm \linethickness{0.4pt}
\begin{picture}(80,-35)

\put(-25,5){\circle*{1}}

 \put(-25,-5){\circle*{1}}

\put(-15,0){\circle*{1}} \put(-20,8){\circle*{1}}
\put(-10,8){\circle*{1}}

\put(-17,4){$\cdots$} \put(-20,8.5){$\overbrace{\hspace{1cm}}$}
\put(-15,11.5){\makebox(0,0)[cc]{$_{2\alpha-n+1}$}}

 \put(-20,-8){\circle*{1}}
\put(-10,-8){\circle*{1}} \put(-20,-16){\circle*{1}}
\put(-10,-16){\circle*{1}} \put(-5,5){\circle*{1}}

\put(-5,-5){\circle*{1}} \put(3,5){\circle*{1}}
\put(3,-5){\circle*{1}} \qbezier(-15,0)(-15,0)(-25,5)
 \qbezier(-15,0)(-15,0)(-25,-5) \qbezier(-15,0)(-15,0)(-20,8)
  \qbezier(-15,0)(-15,0)(-10,8)\qbezier(-15,0)(-15,0)(-5,5)
 \qbezier(-15,0)(-15,0)(-5,-5) \qbezier(-15,0)(-15,0)(-20,-8)
  \qbezier(-15,0)(-15,0)(-10,-8)\qbezier(-20,-16)(-20,-16)(-20,-8)
  \qbezier(-10,-16)(-10,-16)(-10,-8)\qbezier(-5,-5)(-5,-5)(-5,5) \qbezier(-25,-5)(-25,-5)(-25,5)

\qbezier(3,5)(3,5)(-5,5)\qbezier(3,-5)(3,-5)(-5,-5)
\put(-17,-12){$\cdots$} \put(-20,-16.5){$\underbrace{\hspace{1cm}}$}
\put(-13,-19.5){\makebox(0,0)[cc]{$_{n-\alpha-4}$}}
\put(-10,-30){\makebox(0,0)[cc]{ $M_{2}(n,\alpha)$}}
 \put(32,-8){\circle*{1}}
\put(35,0){\circle*{1}} \put(40,8){\circle*{1}}
\put(40,-8){\circle*{1}} \put(45,0){\circle*{1}}
\put(43,10){\circle*{1}}\put(53,10){\circle*{1}}
\put(43,-9){\circle*{1}}\put(53,-9){\circle*{1}}
\put(43,-18){\circle*{1}}\put(53,-18){\circle*{1}}

\qbezier(45,0)(45,0)(35,0)
\qbezier(45,0)(45,0)(40,8)\qbezier(45,0)(45,0)(40,-8)
\qbezier(45,0)(45,0)(43,10) \qbezier(45,0)(45,0)(53,10)
\qbezier(45,0)(45,0)(43,-9)\qbezier(45,0)(45,0)(53,-9)

\qbezier(40,-8)(40,-8)(35,0)\qbezier(40,8)(40,8)(35,0)
\qbezier(43,-18)(43,-18)(43,-9)\qbezier(53,-18)(53,-18)(53,-9)
\qbezier(32,-8)(32,-8)(40,-8) \put(45,5){$\cdots$}
\put(43,10.5){$\overbrace{\hspace{1cm}}$}
\put(49,13.5){\makebox(0,0)[cc]{$_{2\alpha-n+1}$}}

\put(46,-14){$\cdots$} \put(43,-18.5){$\underbrace{\hspace{1cm}}$}
\put(49,-21.5){\makebox(0,0)[cc]{$_{n-\alpha-3}$}}
\put(40,-30){\makebox(0,0)[cc]{ $M_{3}(n,\alpha)$}}

\put(80,0){\circle*{1}} \put(90,0){\circle*{1}}
\put(85,8){\circle*{1}}\put(85,-8){\circle*{1}}
\put(80,-14){\circle*{1}} \put(90,-14){\circle*{1}}
 \put(80,-20){\circle*{1}}\put(90,-20){\circle*{1}}
 \put(92,-2){\circle*{1}}  \put(92,-12){\circle*{1}}
\qbezier(85,-8)(85,-8)(92,-2) \qbezier(85,-8)(85,-8)(92,-12)
\qbezier(85,-8)(85,-8)(80,-14) \qbezier(85,-8)(85,-8)(90,-14)
\qbezier(85,-8)(85,-8)(80,0)\qbezier(85,-8)(85,-8)(90,0)
\qbezier(80,-20)(80,-20)(80,-14) \qbezier(90,-20)(90,-20)(90,-14)
\qbezier(90,0)(90,0)(80,0)

 \put(90,-9){$\vdots$}

\put(101,-8){\makebox(0,0)[cc]{$_{2\alpha-n+1}$}}

\put(85,-8){$\left.
\begin{array}{ll}
     &  \\
     &  \\
             \end{array}
\right\}    $    }

\qbezier(85,8)(85,8)(80,0) \qbezier(85,8)(85,8)(90,0)
\put(72,0){\circle*{1}}\qbezier(72,0)(72,0)(80,0)
\put(79,2){\makebox(0,0)[cc]{$u$}}
\put(83,-8){\makebox(0,0)[cc]{$v$}}

\put(83,-16){$\cdots$} \put(80,-20.5){$\underbrace{\hspace{1cm}}$}
\put(85,-23.5){\makebox(0,0)[cc]{$_{n-\alpha-3}$}}
\put(85,-30){\makebox(0,0)[cc]{ $M_{3}^{\prime}(n,\alpha)$}}

\put(35,-40){\makebox(0,0)[cc]{Fig.5 the graphs $M_{2}(n,\alpha)$,
$M_{3}(n,\alpha)$ and $M_{3}^{\prime}(n,\alpha)$}}
\end{picture}
\end{center}

\vspace{4cm}
\begin{lemma}
\label{juti3}
 Let $M_{2}(n,\alpha)$ and $M_{3}(n,\alpha)$ be the graphs as shown in Fig. 5. Then

 (1). $\rho(M_{2}(n,\alpha))<\rho(M(n,\alpha))$;

(2). $\rho(M_{3}(n,\alpha))<\rho(M(n,\alpha))$.
\end{lemma}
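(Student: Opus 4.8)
The plan is to mimic the computation already used for $M_1(n,\alpha)$ in Lemma \ref{juti2}: compute the characteristic polynomials of $M_2(n,\alpha)$ and $M_3(n,\alpha)$ via Lemma \ref{formular1}, extract the ``essential factor'' that governs the spectral radius, and compare it with $f(x)=x^4-(\alpha+3)x^2-4x+(2\alpha-n+1)$ from (\ref{fuction for A}), which has $\rho(M(n,\alpha))$ as its largest root by (\ref{poly for A}). First I would record that, since $n\ge 10$ forces $\alpha\ge 5$, each of $\rho(M_2(n,\alpha))$ and $\rho(M_3(n,\alpha))$ exceeds $2$; this follows from Lemma \ref{bound}, because the vertex of degree $4$ in $B(3,1,3)$ survives in both graphs, so $\Delta\ge 4$. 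This $\rho>2$ bound is exactly what makes the polynomial comparison work.

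For part (1), I would apply Lemma \ref{formular1} repeatedly (peeling off the pendant paths and then the cycle vertices of $B(3,1,3)$) to obtain an expression of the shape $\Phi(M_2(n,\alpha);x)=x^{a}(x^2-1)^{b}\,f_2(x)$ for an explicit polynomial $f_2(x)$ of bounded degree, so that $\rho(M_2(n,\alpha))$ is the largest root of $f_2$. Then I would verify an identity of the form $f_2(x)-(x^2-1)^{c}f(x)=x\cdot p(x)$ with $p$ a low-degree polynomial whose coefficients are affine in $n$ and $\alpha$; writing $\rho=\rho(M_2(n,\alpha))>2$ and using $\alpha\ge 5$, $n\le 2\alpha+2$ (equivalently the path-count parameters are nonnegative), I would show $p(\rho)>0$, hence $f(\rho)<0$, so the largest root of $f$ strictly exceeds $\rho$, i.e.\ $\rho(M(n,\alpha))>\rho(M_2(n,\alpha))$. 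Part (2) is handled identically with $M_3(n,\alpha)$ in place of $M_2(n,\alpha)$: same peeling via Lemma \ref{formular1}, same factorization $\Phi(M_3(n,\alpha);x)=x^{a'}(x^2-1)^{b'}f_3(x)$, same style of difference identity $f_3(x)-(x^2-1)^{c'}f(x)=x\cdot q(x)$, and the same sign chase giving $f(\rho(M_3(n,\alpha)))<0$.

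The main obstacle is the bookkeeping in the ``tedious calculations'' — correctly tracking the exponents $a,b$ (and $a',b'$) of the trivial factors $x$ and $x^2-1$ coming from the attached paths of almost equal length, and getting the constant/linear-in-$n,\alpha$ coefficients of $f_2,f_3$ right, since a single sign error there breaks the comparison. The one genuinely non-routine point is ensuring that the remainder polynomials $p(x)$, $q(x)$ are positive for all $x>2$ in the full parameter range $\alpha\ge 5$, $2\alpha-n+1\ge 0$, $n-\alpha-3\ge 0$ (and $n-\alpha-4\ge 0$ for $M_2$): as in Lemma \ref{juti2}, one expects to bound $p(x)>(\text{something})x+(\text{const})$ by grouping the cubic and quadratic terms and using $x>2$ to trade powers of $x$ for the linear term, then check the resulting linear expression is nonnegative under the constraints. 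If the leading coefficient of $p$ or $q$ happened to be negative for small $\alpha$, one would instead argue in the other direction (show $f_2$ or $f_3$ is negative at $\rho(M(n,\alpha))$), but the structure of the graphs — $M_2$ and $M_3$ being obtained from the $M_1$-type construction by deleting/relocating a pendant edge — makes it natural to expect the same orientation as in Lemma \ref{juti2}.
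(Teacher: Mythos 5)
Your plan is essentially the paper's own proof: the paper likewise uses Lemma \ref{formular1} to factor $\Phi(M_{2}(n,\alpha);x)$ and $\Phi(M_{3}(n,\alpha);x)$ into trivial factors times essential polynomials $f_{2},f_{3}$, verifies difference identities against $(x^{2}-1)^{2}f(x)$ (with remainders $x[(\alpha-2)x^{3}+(n-2\alpha)x+2]$ and $x[(\alpha-4)x^{3}-2x^{2}+(n-2\alpha+1)x+2]$), and runs exactly the sign chase you describe to get $f(\rho)<0$. The only caveat is your justification of $\rho>2$: the degree-$4$ vertex of $B(3,1,3)$ is not present in $M_{3}(n,\alpha)$ (whose base is a $\theta$-graph) and $\Delta\ge 4$ only yields $\rho\ge 2$; the paper instead uses $\rho^{2}>\Delta=\alpha+1\ge 6$, which both repairs this point and provides the sharper bound it invokes in the estimates.
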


\begin{proof}
Recall that $\rho(M(n,\alpha))$ is the largest root of the equation
$f(x)=0$, where
$$ f(x)=x^4-(\alpha+3)x^2-4x+(2\alpha-n+1).$$
 (1). Let
$$f_{2}(x)=x^6-x^5-
(\alpha+3)x^4+(\alpha-2)x^3-(n-3\alpha-5)x^2+(n-2\alpha+1)x-(n-2\alpha-1).$$
Then we have
$$\Phi(M_{2}(n,\alpha);x)=x^{2\alpha-n}(x^2-1)^{n-\alpha-4}(x^{2}+x-1)f_{2}(x).$$
Thus $\rho(M_{2}(n,\alpha))$ is the largest root of the equation
$f_{2}(x)=0$, and it can be verified that
\begin{align}
\label{relation for 2}
(x^{2}+x-1)f_{2}(x)-(x^2-1)^{2}f(x)=x[(\alpha-2) x^3+(n
-2\alpha)x+2].
\end{align}
For $M_{2}(n,\alpha)$ when $n\geq 10$ we have $\alpha\geq 5$, and
write $\rho(M_{2}(n,\alpha))=\rho$, then
$\rho^{2}>\Delta(M_{2}(n,\alpha))=\alpha+1$, and $\rho>2$. Then from
(\ref{relation for 2}) we have

\begin{eqnarray*}
-\frac{(\rho^2-1)^{2}}{\rho}f(\rho)
 &=&(\alpha-2)\rho^{3}+(n-2\alpha)\rho+2\\
&>&(\alpha-2)(\alpha+1)\rho+(n-2\alpha)\rho+2\\
&=&[\alpha(\alpha-3)+(n -2)]\rho+2>0.
  \end{eqnarray*}
Thus  $f(\rho)<0$, then the largest root of equation $f(x)=0$ is
larger than $\rho$, i.e.,
$\rho(M(n,\alpha))>\rho(M_{2}(n,\alpha))$.

\medskip

(2). Let
 $$ f_{3}(x)=x^8-(\alpha+5)x^6-4x^5-
(n-5\alpha-4)x^4+6x^3+(3n-7\alpha-4)x^2-2x-(n-2\alpha-1),$$ then
$$
\Phi(M_{3}(n,\alpha);x)=x^{2\alpha-n}(x^2-1)^{n-\alpha-4}f_{3}(x).
$$
So $\rho(M_{3}(n,\alpha))$ is the largest root of the equation
$f_{3}(x)=0$, and it may be verified that
\begin{align}
\label{relation for 3}
f_{3}(x)-(x^2-1)^{2}f(x)=x[(\alpha-4)x^3-2x^2+(n-2\alpha +1)x+2].
\end{align}
For $M_{3}(n,\alpha)$ when $n\geq 10$ we have $\alpha\geq 5$, and
write $\rho(M_{3}(n,\alpha))=\rho$, then
$\rho^{2}>\Delta(M_{3}(n,\alpha))=\alpha+1$. Then from
(\ref{relation for 3}) we have

\begin{eqnarray*}
-\frac{(\rho^2-1)^{2}}{\rho}f(\rho)
&=&(\alpha-4)\rho^3-2\rho^2+(n-2\alpha+1)\rho+2\\
&=&(\rho^{3}-2\rho^{2}+(\alpha-5)\rho^3+(n-2\alpha+1)\rho+2\\
&>&(\rho-2)(\alpha+1)+(\alpha-5)(\alpha+1)\rho+(n-2\alpha+1)\rho+2\\
&=&(\alpha^{2}-5\alpha+n-3)\rho-2\alpha\\
 &>&2\alpha(\alpha-6)+2n-6>0.
\end{eqnarray*}
Thus  $f(\rho)<0$, then the largest root of equation $f(x)=0$ is
larger than $\rho$, i.e., $\rho(M(n,\alpha))>\rho(M_{3}(n,\alpha))$.
\end{proof}

Denote by ${\cal B}(n,\alpha,\alpha-1,2)$ the set of the graphs $G$
in ${\cal B}(n,\alpha,\alpha-1)$ with $|V^{\prime}(G)|= 2$ and the
vertices in $V^{\prime}(G)$ lie on a triangle.
\medskip

\begin{lemma}
\label{D2}
 Let $G$ be a graph in ${\cal B}(n,\alpha,\alpha-1,2)$. Then $\rho(G)<\rho(M(n,\alpha))$.
\end{lemma}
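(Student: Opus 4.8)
The plan is to follow the line of argument used in the proof of Lemma~\ref{largest in 1} and Theorem~\ref{Bnaa}: locate the graph $G^{*}$ of maximal spectral radius inside ${\cal B}(n,\alpha,\alpha-1,2)$, show by repeated edge-shifting that $G^{*}$ is one of the explicit graphs $M_{2}(n,\alpha)$, $M_{3}(n,\alpha)$, $M_{3}^{\prime}(n,\alpha)$ of Fig.~5, and then finish with Lemma~\ref{juti3}.

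So let $G^{*}$ attain the maximum in ${\cal B}(n,\alpha,\alpha-1,2)$, let $x$ be its Perron vector, and let $u,v,w$ be the triangle with $V^{\prime}(G^{*})=\{u,v\}$; since $w$ is non-pendant and $|V^{\prime}(G^{*})|=2$, the vertex $w$ carries a pendant neighbour, and so does every non-pendant vertex other than $u$ and $v$. First I would clear the two deficient vertices: whichever of $u,v$ has the smaller Perron coordinate has all of its edges not lying on a cycle of $G^{*}$ shifted onto the other one by Lemma~\ref{deprive l}; every relocated neighbour is non-pendant, so the pendant--neighbour pattern is unchanged, $G^{*}$ stays in ${\cal B}(n,\alpha,\alpha-1,2)$, and $\rho$ increases strictly --- a contradiction unless $u$ and $v$ are incident only with cycle edges. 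Next, by the cycle-shortening move of Claim~2 in the proof of Lemma~\ref{largest in 1} (delete an edge of a cycle of length $\geq 4$ and reattach it as a chord at the endpoint of larger Perron coordinate), I would force $\widehat{G^{*}}$ to be $B(3,1,3)$ or $P(0,1,1)$, with $uvw$ one of its two triangles.

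Once the base is a pair of triangles, the remaining vertices form trees hanging on the base. Arguing as in Claim~3 of Lemma~\ref{largest in 1}, I would concentrate all these trees onto the base vertex $z\notin\{u,v\}$ of maximal Perron coordinate, while keeping exactly one pendant neighbour at each base vertex that the condition $|V^{\prime}(G^{*})|=2$ forces to stay saturated; Lemma~\ref{grafting} then replaces every path attached at $z$ by a path of length at most $2$. Since $2\alpha-n+1\geq 0$ pendant edges together with $n-\alpha-3$ (or $n-\alpha-4$) paths of length $2$ use up all the free vertices, this determines $G^{*}$; running through the two possible bases and the admissible placements of $\{u,v\}$ on the triangle leaves $G^{*}\in\{M_{2}(n,\alpha),M_{3}(n,\alpha),M_{3}^{\prime}(n,\alpha)\}$.

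Finally, Lemma~\ref{juti3} already gives $\rho(M_{2}(n,\alpha))<\rho(M(n,\alpha))$ and $\rho(M_{3}(n,\alpha))<\rho(M(n,\alpha))$, and for $M_{3}^{\prime}(n,\alpha)$ I would add the inequality $\rho(M_{3}^{\prime}(n,\alpha))<\rho(M(n,\alpha))$ either by the same characteristic-polynomial estimate as in Lemmas~\ref{juti1}, \ref{juti2} and \ref{juti3}, or by first checking $\rho(M_{3}^{\prime}(n,\alpha))<\rho(M_{3}(n,\alpha))$ via Lemma~\ref{deprive l} applied to the two graphs of Fig.~5. In every case $\rho(G^{*})<\rho(M(n,\alpha))$, and since $G^{*}$ is extremal, $\rho(G)<\rho(M(n,\alpha))$ for all $G\in{\cal B}(n,\alpha,\alpha-1,2)$. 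The main obstacle is the bookkeeping rather than any single idea: each edge-shift must be checked to preserve the pendant count $\alpha-1$, to keep $|V^{\prime}|=2$, and to keep the two deficient vertices on a common triangle, which forces a careful split into cases (base $\infty$ versus $\theta$, placement of $\{u,v\}$) and, at a few points, a choice of move designed so as not to strip $w$ of its pendant neighbour.
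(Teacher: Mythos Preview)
Your proposal follows essentially the same route as the paper's proof: take an extremal $G^{*}$ in ${\cal B}(n,\alpha,\alpha-1,2)$, split into the $\infty$- and $\theta$-base cases, use Perron-vector edge-shifts (as in Lemma~\ref{largest in 1}) to force the base down to $B(3,1,3)$ or $P(0,1,1)$ and to concentrate the attached trees, arrive at $G^{*}\in\{M_{2}(n,\alpha),M_{3}(n,\alpha),M_{3}^{\prime}(n,\alpha)\}$, and finish via Lemma~\ref{juti3}; the paper likewise disposes of $M_{3}^{\prime}(n,\alpha)$ by a Perron-coordinate comparison with $M_{3}(n,\alpha)$ rather than a separate polynomial computation, which is exactly one of your two suggested options. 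The only point where you should tighten the argument is the first ``clearing'' step: shifting non-cycle edges from the $V'$-vertex of smaller Perron coordinate to the other one shows that \emph{that} vertex has only cycle edges, not that both do---you need a second comparison (as the paper implicitly does when it asserts $d_{\widehat{G^{*}}}(u)=d_{\widehat{G^{*}}}(w)=2$) to pin down the location of both $V'$-vertices on the triangle.
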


\begin{proof}
Let $G^{*}$  be a graph with maximal spectral radius in ${\cal
B}(n,\alpha,\alpha-1,2)$.  First suppose that $G^{*}$ is in ${ \cal
B}_{1}(n)$. Then  $\widehat{G^{*}}=B(3,\ell,q)$. Let $u,w$ be the
two vertices in $V^{\prime}(G^{*})$. By considering some (proper)
coordinates
 of the Perron vector of $G^{*}$ and using Lemma
\ref{deprive l} we may deduce that
$d_{\widehat{G^{*}}}(u)=d_{\widehat{G^{*}}}(w)=2$. And by using the
similar arguments as the proof of Lemma \ref{largest in 1} we have
$\ell=1$ and $q=3$. Let $v$ be the vertex of  $G^{*}$ with
$d_{\widehat{G^{*}}}(v)=4$. Furthermore we have  every  vertex
outside of $V_{c}(G^{*})$ has degree at most 2, and the vertex in
$V_{c}(G^{*})\setminus\{u,v,w\}$ has degree 3, and
$d_{G^{*}}(u)=d_{G^{*}}(v)=2$. Thus we have $G^{*}=M_{2}(n,\alpha)$.
 From Lemma \ref{juti3} we know that $\rho(G^{*})<\rho(M(n,\alpha))$.

Now suppose that $G^{*}$ is in ${ \cal B}_{2}(n)$. Then
$\widehat{G^{*}}=P(0,1,q)$. Similarly as above  we may deduce that
one  vertex   in $V^{\prime}(G^{*})$ has degree 2 in
$\widehat{G^{*}}$. Furthermore  we have  $G^{*}\in
\{M_{3}^{\prime}(n,\alpha), M_{3}(n,\alpha)\}$. Considering the
coordinates $x_{u}$ and $x_{v}$ of the Perron vector $x$ of
$M_{3}^{\prime}(n,\alpha)$ and using Lemma \ref{deprive l} we may
deduce that
$\rho(M_{3}^{\prime}(n,\alpha))<\mbox{max}\{\rho(M_{3}(n,\alpha))\}$.
Combining Lemma \ref{juti3}
 we have
 $$\rho(G)\leq\rho(G^{*})=\mbox{max}\{\rho(M_{3}^{\prime}(n,\alpha)),\rho(M_{3}(n,\alpha))\}=\rho(M_{3}(n,\alpha))<\rho(M(n,\alpha)).$$
Thus we have $\rho(G)<\rho(M(n,\alpha))$ for any graph  $G$ in
${\cal B}(n,\alpha,\alpha-1,2)$.
\end{proof}

\vspace{1cm}
\begin{center}
\unitlength 1mm \linethickness{0.4pt}
\begin{picture}(80,-35)

\put(-25,-5){\circle*{1}}
 \put(-25,-15){\circle*{1}}
\put(-15,-10){\circle*{1}}
 \put(-5,-5){\circle*{1}}
  \put(-10,3){\circle*{1}}
   \put(-10,11){\circle*{1}}
     \put(0,3){\circle*{1}}
     \put(-7,5){$\cdots$} \put(-10,11.5){$\overbrace{\hspace{1cm}}$}
\put(-5,15){\makebox(0,0)[cc]{$_{n-\alpha-4}$}}
   \put(0,11){\circle*{1}}
\qbezier(-5,-5)(-5,-5)(-15,-10)\qbezier(-5,-5)(-5,-5)(-5,-15)
\qbezier(-5,-5)(-5,-5)(-10,3)\qbezier(-5,-5)(-5,-5)(0,3)
\qbezier(-5,-5)(-5,-5)(3,-10)\qbezier(-5,-5)(-5,-5)(3,0)
\qbezier(-25,-5)(-25,-5)(-15,-10)\qbezier(-25,-15)(-25,-15)(-15,-10)\qbezier(-5,-15)(-5,-15)(-15,-10)
\qbezier(-25,-5)(-25,-5)(-25,-15)\qbezier(-5,-15)(-5,-15)(3,-15)
\qbezier(-10,11)(-10,11)(-10,3)\qbezier(0,11)(0,11)(0,3)
\put(-5,-15){\circle*{1}}
 \put(3,-15){\circle*{1}}
 \put(2,-6){$\vdots$}
\put(12,-5){\makebox(0,0)[cc]{$_{2\alpha-n+2}$}} \put(-4,-6){$\left.
\begin{array}{lll}
     &  \\
     &  \\
          \end{array}
\right\}    $    }

 \put(3,0){\circle*{1}}
  \put(3,-10){\circle*{1}}
\put(-10,-30){\makebox(0,0)[cc]{$M_{4}(n,\alpha)$}}

\put(25,5){\circle*{1}} \put(25,-5){\circle*{1}}
\put(35,0){\circle*{1}} \put(45,0){\circle*{1}}
\put(40,8){\circle*{1}}\put(50,8){\circle*{1}}
\put(43.5,4.5){$\cdots$} \put(40,8.5){$\overbrace{\hspace{1cm}}$}
\put(45,12){\makebox(0,0)[cc]{$_{2\alpha-n+2}$}}

\put(40,-8){\circle*{1}}\put(50,-8){\circle*{1}}
\put(40,-16){\circle*{1}}\put(50,-16){\circle*{1}}
\put(55,5){\circle*{1}} \put(55,-5){\circle*{1}}
\put(61,5){\circle*{1}} \put(61,-5){\circle*{1}}
\qbezier(45,0)(45,0)(35,0)\qbezier(45,0)(45,0)(40,8)

\qbezier(45,0)(45,0)(50,8) \qbezier(45,0)(45,0)(40,-8)
\qbezier(45,0)(45,0)(50,-8) \qbezier(45,0)(45,0)(55,5)
\qbezier(45,0)(45,0)(55,-5) \qbezier(40,-16)(40,-16)(40,-8)
\qbezier(50,-16)(50,-16)(50,-8)

\qbezier(25,5)(25,5)(35,0)\qbezier(25,-5)(25,-5)(35,0)

\qbezier(25,5)(25,5)(25,-5)\qbezier(55,5)(55,5)(55,-5)

\put(40,-30){\makebox(0,0)[cc]{ $M_{5}(n,\alpha)$}}
\qbezier(61,-5)(61,-5)(55,-5)\qbezier(61,5)(61,5)(55,5)
\put(43.5,-14){$\cdots$} \put(40,-16.5){$\underbrace{\hspace{1cm}}$}
\put(45,-20){\makebox(0,0)[cc]{$_{n-\alpha-5}$}}
\put(80,0){\circle*{1}} \put(90,0){\circle*{1}}
\put(85,8){\circle*{1}}\put(85,-8){\circle*{1}}
\put(80,-14){\circle*{1}} \put(90,-14){\circle*{1}}
 \put(80,-20){\circle*{1}}\put(90,-20){\circle*{1}}
 \put(92,-2){\circle*{1}}  \put(92,-12){\circle*{1}}
\qbezier(85,-8)(85,-8)(92,-2) \qbezier(85,-8)(85,-8)(92,-12)
\qbezier(85,-8)(85,-8)(80,-14) \qbezier(85,-8)(85,-8)(90,-14)
\qbezier(85,-8)(85,-8)(80,0)\qbezier(85,-8)(85,-8)(90,0)
\qbezier(80,-20)(80,-20)(80,-14) \qbezier(90,-20)(90,-20)(90,-14)
\qbezier(90,0)(90,0)(80,0)
 \put(90,-9){$\vdots$}

\put(101,-8){\makebox(0,0)[cc]{$_{2\alpha-n+2}$}}
\put(85,-8){$\left.
\begin{array}{ll}
     &  \\
     &  \\
             \end{array}
\right\}    $    }

\qbezier(85,8)(85,8)(80,0) \qbezier(85,8)(85,8)(90,0)
\put(79,2){\makebox(0,0)[cc]{$u$}}
\put(83,-8){\makebox(0,0)[cc]{$v$}}

\put(83,-16){$\cdots$} \put(80,-20.5){$\underbrace{\hspace{1cm}}$}
\put(85,-23.5){\makebox(0,0)[cc]{$_{n-\alpha-3}$}}
\put(90,-30){\makebox(0,0)[cc]{ $M_{6}(n,\alpha)$}}

\put(35,-40){\makebox(0,0)[cc]{Fig.6 \quad the graphs
$M_{4}(n,\alpha)$, $M_{5}(n,\alpha)$ and $M_{6}(n,\alpha)$}}
\end{picture}
\end{center}
\vspace{4cm}

\begin{lemma}
\label{juti} Let $M_{i}(n,\alpha)$ be the graph as shown in Fig.6.
Then we have  $\rho(M_{i}(n,\alpha))<\rho(M(n,\alpha))$ for each
$i=4,5,6$.
\end{lemma}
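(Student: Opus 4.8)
The plan is to establish each of the three inequalities $\rho(M_i(n,\alpha))<\rho(M(n,\alpha))$, $i=4,5,6$, by exactly the scheme already used for $M_1,M_2,M_3$ in Lemmas \ref{juti2} and \ref{juti3}: compute the characteristic polynomial of $M_i(n,\alpha)$, strip it down to a small ``essential'' factor whose largest root is $\rho(M_i(n,\alpha))$, and then compare this factor with the quartic $f$ of \eqref{fuction for A} by means of an explicit polynomial identity together with a sign estimate at the Perron root.

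Concretely, for each $i$ I would apply Lemma \ref{formular1} repeatedly --- first deleting the pendant vertices attached along the various paths of $M_i(n,\alpha)$ (each such deletion contributes a factor $x$ or $x^2-1$, as in the derivation of \eqref{poly for A}), and then expanding at the high-degree vertex that carries the cycles. The outcome in every case will be
\[
  \Phi(M_i(n,\alpha);x)=x^{2\alpha-n}(x^{2}-1)^{m_i}\,q_i(x)\,f_i(x),
\]
where $q_i$ is a trivial factor ($1$ or something like $x^2+x-1$, compare \eqref{relation for 2}) and $f_i$ is a polynomial of degree $6$ or $8$. By Perron--Frobenius, $\rho(M_i(n,\alpha))$ is the largest root of $f_i$, it dominates all the other roots, and in particular $\rho(M_i(n,\alpha))>2$ and $\rho(M_i(n,\alpha))^{2}\ge\Delta(M_i(n,\alpha))$ by Lemma \ref{bound}.

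The core of the argument is an identity of the form
\[
  q_i(x)\,f_i(x)-(x^{2}-1)^{2}f(x)=x\,p_i(x),
\]
with $p_i$ an explicit cubic or quartic whose coefficients are affine in $n$ and $\alpha$ (the analogue of \eqref{relation for 2} and \eqref{relation for 3}). Setting $\rho=\rho(M_i(n,\alpha))$ and dividing through by the positive quantity $(\rho^{2}-1)^{2}/\rho$, it remains to show $p_i(\rho)>0$. I would do this exactly as in Lemma \ref{juti3}: use $\rho^{2}\ge\Delta(M_i(n,\alpha))$ (a concrete value close to $\alpha$, read off from Fig.~6) to trade the leading power of $\rho$ in $p_i$ for a multiple of $\alpha$ and $\rho$, then use $\rho>2$ and $n\ge10$ (so $\alpha\ge5$) to finish a short chain of inequalities forcing positivity. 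Hence $f(\rho)<0$. Since $f$ is monic of degree $4$, $f(x)\to+\infty$ as $x\to\infty$, so $f$ has a root strictly larger than $\rho$; that root is $\rho(M(n,\alpha))$, and therefore $\rho(M(n,\alpha))>\rho(M_i(n,\alpha))$. Carrying this out for $i=4,5,6$ completes the proof.

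I expect the only real obstacle to be bookkeeping: running the Schwenk expansions correctly for three graphs that each carry several attached paths, guessing the right reduced factors $f_i$, and --- for whichever $p_i$ has the narrowest positivity margin --- making the final estimate go through with the exact value of $\Delta(M_i(n,\alpha))$ and the bound $n\ge10$. One should also confirm that the hypotheses $\alpha\ge\frac{n-1}{2}$ and $n\ge10$ keep all the path-length labels in Fig.~6 (such as $2\alpha-n+2$ and $n-\alpha-5$) nonnegative, or else dispatch the few boundary instances by a direct computation.
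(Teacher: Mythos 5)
Your proposal follows essentially the same route as the paper's own proof: for each $i=4,5,6$ the paper uses Lemma \ref{formular1} to write $\Phi(M_i(n,\alpha);x)$ as a power of $x$ times a power of $x^2-1$ times (possibly a trivial factor such as $(x+1)^2$ or $x^2+x+1$ and) an essential factor $f_i$ of degree $7$, $7$, $5$ respectively, then verifies an identity of exactly the shape you describe comparing $f_i$ with $(x^2-1)^2f(x)$, and finally shows the remainder polynomial is positive at $\rho=\rho(M_i(n,\alpha))$ using $\rho>2$, $\rho^2>\Delta(M_i(n,\alpha))$ and $\alpha\ge 5$, concluding $f(\rho)<0$ and hence $\rho(M(n,\alpha))>\rho$. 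The only content you leave undone is the explicit polynomial bookkeeping, which is precisely what the paper's proof consists of, and your anticipated estimates match its chains of inequalities.
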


\begin{proof}
 Recall that $\rho(M(n,\alpha))$ is the largest root of the equation $f(x)=0$, where
$$ f(x)=x^4-(\alpha+3)x^2-4x+(2\alpha-n+1).$$

(1). Let
$$ f_{4}(x)=x^7-(\alpha+5)x^5-4x^4-
(n-6\alpha-3)x^3+2(\alpha+1)x^2+(4n-8\alpha-9)x+2(n-2\alpha-2),
$$
then
$$\Phi(M_{4}(n,\alpha);x)=x^{2\alpha-n+1}(x^2-1)^{n-\alpha-4}f_{4}(x),$$
and $\rho(M_{4}(n,\alpha))$ is the largest root of the equation
$f_{4}(x)=0$. And it may be verified that
 \begin{align}
 \label{relation for 4}
xf_{4}(x)-(x^2-1)^{2}f(x)=(2\alpha-5)x^{4}+2(\alpha-3)
x^3+(2n-3\alpha-4)x^{2}+2(n-2\alpha)x+(n-2\alpha-1).
\end{align}
 For
$M_{4}(n,\alpha)$ when $n\geq10$ we have $\alpha \geq5$, and write
$\rho(M_{4}(n,\alpha))=\rho$, then
$\rho^{2}>\Delta(M_{3}(n,\alpha))=\alpha$. Then from (\ref{relation
for 4}) we have
\begin{eqnarray*}
-(\rho^2-1)^{2}f(\rho)
 &=&(2\alpha-5)\rho^{4}+(2\alpha-6)
\rho^3+(2n-3\alpha-4)\rho^{2}+(2n-4\alpha)\rho+(n-2\alpha-1)\\
&>&[(2\alpha-5)\alpha+(2n-3\alpha-4)]\rho^{2}+[(2\alpha-6)\alpha
+(2n-4\alpha)]\rho+(n-2\alpha-1)\\
&>&(2\alpha-5)\alpha+(2n-3\alpha-4)+(n-2\alpha-1)\\
&=&2\alpha^{2}-10\alpha+3n-5>0.
  \end{eqnarray*}
Thus  $f(\rho)<0$, then the largest root of equation $f(x)=0$ is
larger than $\rho$, i.e., $\rho(M(n,\alpha))>\rho(M_{4}(n,\alpha))$.

\medskip

(2). Let $$f_{5}(x)=x^7-3x^{6}-\alpha x^5+3(\alpha+1)x^4-
(n-\alpha-4)x^3+(3n-8\alpha-7)x^2-(n-2\alpha-3)x-2(n-2\alpha-2),$$
then we have
$$
\Phi(M_{5}(n,\alpha);x)=x^{2\alpha-n+1}(x^2-1)^{n-\alpha-6}(x+1)^{2}(x^{2}+x+1)f_{5}(x).$$
Thus $\rho(M_{3}(n,\alpha))$ is the largest root of the equation
$f_{5}(x)=0$, and it may be verified that
\begin{eqnarray*}
\label{relation for 5}
&&x(x^{2}+x+1)f_{5}(x)-(x+1)^{2}(x^2-1)^{2}f(x)\\
&&=(2\alpha-4)x^{6}-(2\alpha-6)x^{5}+(2n-6\alpha+5)x^4-(2n-4\alpha)x^{3}-(2n-5\alpha+3)x^{2}+2x+(n-2\alpha-1).
  \end{eqnarray*}
 For $M_{5}(n,\alpha)$
when $n\geq10$ we have $\alpha \geq5$, and write
$\rho(M_{3}(n,\alpha))=\rho$, then $\rho>2$. Then from
(\ref{relation for 5}) we have
\begin{eqnarray*}
&&-(\rho+1)^{2}(\rho^2-1)^{2}f(\rho)\\
 &=&(2\alpha-4)\rho^{6}-(2\alpha-6)\rho^{5}+(2n-6\alpha+5)\rho^4-(2n-4\alpha)\rho^{3}-(2n-5\alpha+3)\rho^{2}+2\rho+(n-2\alpha-1)\\
&>&(2\alpha-2)\rho^{5}+(2n-6\alpha+5)\rho^4-(2n-4\alpha)\rho^{3}-(2n-5\alpha+3)\rho^{2}+2\rho+(n-2\alpha-1)\\
&>&(2n-2\alpha+1)\rho^4-(2n-4\alpha)\rho^{3}-(2n-5\alpha+3)\rho^{2}+2\rho+(n-2\alpha-1)\\
&>&(2n+2)\rho^{3}-(2n-5\alpha+3)\rho^{2}+2\rho+(n-2\alpha-1)\\
&>&(2n+5\alpha+1)\rho^{2}+2\rho+(n-2\alpha-1)>0.
  \end{eqnarray*}
Thus  $f(\rho)<0$, then the largest root of equation $f(x)=0$ is
larger than $\rho$, i.e., $\rho(M(n,\alpha))>\rho(M_{5}(n,\alpha))$.

\medskip

(3). Let
$$
f_{6}(x)=x^5-2x^4-(\alpha+2)x^3+2(\alpha+1)x^2-(n-2\alpha-2)x+(2n-4\alpha-4),$$
then  we have
$$
\Phi(M_{6}(n,\alpha);x)=x^{2\alpha-n+1}(x^2-1)^{n-\alpha-4}(x+1)^2f_{6}(x),
$$
and $\rho(M_{6}(n,\alpha))$ is the largest root of the equation
$f_{6}(x)=0$. And it may be verified that
\begin{align}
\label{relation for6}
x(x+1)^2f_{6}(x)-(x^2-1)^{2}f(x)=(\alpha-4)x^2+2
x+(n-2\alpha-1).
\end{align}
Then from (\ref{relation for6}) we have
\begin{eqnarray*}
-(\rho^2-1)^{2}f(\rho) &=&(\alpha-4)\rho^2+2\rho+(n-2\alpha-1)\\
&>& (\alpha-4)(\alpha+1)+(n-2\alpha-1)\\
&=&\alpha^{2}-5\alpha-5>0.
  \end{eqnarray*}
Thus  $f(\rho)<0$, then the largest root of equation $f(x)=0$ is
larger than $\rho$, i.e., $\rho(M(n,\alpha))>\rho(M_{6}(n,\alpha))$.
\end{proof}

Denote by ${\cal B}(n,\alpha,\alpha-1,3)$ the set of the graphs $G$
in ${\cal B}(n,\alpha,\alpha-1)$ with $|V^{\prime}(G)|= 3$.
\medskip

\begin{lemma}
\label{D3}
 Let $G$ be a graph in ${\cal B}(n,\alpha,\alpha-1,3)$. Then
$\rho(G)<\rho(M(n,\alpha))$.
\end{lemma}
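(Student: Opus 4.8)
The plan is to follow the template established in the proofs of Lemmas \ref{D1} and \ref{D2}: let $G^{*}$ be a graph of maximal spectral radius in ${\cal B}(n,\alpha,\alpha-1,3)$, show by a sequence of edge-switching arguments (via Lemma \ref{deprive l} and Lemma \ref{grafting}) that $G^{*}$ must be one of a short, explicit list of candidate graphs, and then invoke Lemma \ref{juti} to conclude that each candidate has spectral radius strictly below $\rho(M(n,\alpha))$. Since $|V^{\prime}(G^{*})|=3$, we already know from the discussion preceding Lemma \ref{description1} that the three vertices of $V^{\prime}(G^{*})$ lie on a common triangle; call them $t_{1},t_{2},t_{3}$. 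So the base $\widehat{G^{*}}$ contains this triangle. The first task is to pin down $\widehat{G^{*}}$: I would split into the cases $G^{*}\in{\cal B}_{1}(n)$ and $G^{*}\in{\cal B}_{2}(n)$. In the $\theta$-graph case, the triangle forces $\widehat{G^{*}}=P(0,1,q)$; comparing Perron-vector coordinates of the two degree-3 vertices of the base with Lemma \ref{deprive l} forces $q=3$, and one shows the non-triangle part of the base collapses, so $\widehat{G^{*}}=K_{4}-e$ (the $\theta$-graph on the triangle). In the $\infty$-graph case, the triangle is one of the two cycles, say $C_{p}=C_{3}$; by the Claim-2-style argument in Lemma \ref{largest in 1} the other cycle has length $3$ and the connecting path collapses, so $\widehat{G^{*}}=B(3,1,3)$.

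Next I would control the trees hanging off the base, exactly as in Claims 1–3 of Lemma \ref{largest in 1}: using Lemma \ref{deprive l} by comparing coordinates, every vertex outside $V_{c}(G^{*})$ has degree at most $2$ (else reroute its pendant-carrying branches toward a higher-coordinate vertex), so all attached trees are paths; and by Lemma \ref{grafting} all attached paths have length $1$ except possibly those forced to be longer by the pendant-count constraint. Here the arithmetic of ${\cal B}(n,\alpha,\alpha-1,3)$ enters: the graph has exactly $\alpha-1$ pendant vertices and $n$ vertices total, with three vertices ($t_{1},t_{2},t_{3}$, the members of $V^{\prime}$) forbidden from having a pendant neighbour, while every other non-pendant vertex must have one. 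Counting as in the construction of $M_{4},M_{5},M_{6}$ (Fig.~6), one finds the attached structure is determined up to the choice of which base vertex carries the "extra" path that absorbs the $n-\alpha$-versus-$2\alpha-n$ discrepancy, and up to whether the base is $B(3,1,3)$ or the $\theta$-graph. This should leave exactly the three candidates $M_{4}(n,\alpha)$, $M_{5}(n,\alpha)$, $M_{6}(n,\alpha)$ (and possibly one or two further graphs obtainable from these by a single switch that strictly increases $\rho$, which are therefore not maximal). Applying Lemma \ref{juti} then gives $\rho(G)\le\rho(G^{*})=\max\{\rho(M_{4}),\rho(M_{5}),\rho(M_{6})\}<\rho(M(n,\alpha))$.

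The step I expect to be the main obstacle is the enumeration: proving that the edge-switching reductions really do terminate at the specific list $\{M_{4},M_{5},M_{6}\}$ and nothing else. The degree-$2$-outside-the-cycles argument and the cycle-length arguments are routine adaptations of earlier claims, but with $|V^{\prime}|=3$ there is more freedom in how the three pendant-free vertices sit relative to the degree-$4$ vertex of the base and relative to each other, so the case analysis for where the long path attaches — and a careful check that in each residual case a legal switch (keeping the graph in ${\cal B}(n,\alpha,\alpha-1,3)$, i.e. preserving $\alpha$, the pendant count, and $|V^{\prime}|=3$) strictly raises $\rho$ — is where the real work lies. A secondary subtlety is verifying, when comparing the two possible locations of the extra path within a fixed base, that Lemma \ref{deprive l} applies in the direction claimed; this is the same kind of coordinate comparison used at the end of Lemma \ref{D2} to discard $M_{3}^{\prime}$ in favour of $M_{3}$, and I would handle it the same way. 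Once the list is nailed down, the conclusion is immediate from Lemma \ref{juti}.
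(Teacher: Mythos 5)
Your overall strategy coincides with the paper's: take $G^{*}$ of maximal spectral radius in ${\cal B}(n,\alpha,\alpha-1,3)$, use the switching arguments of Lemma \ref{deprive l} (as in Lemma \ref{largest in 1}) to force $G^{*}$ into a short candidate list, and finish with Lemma \ref{juti}; your final list $\{M_{4}(n,\alpha),M_{5}(n,\alpha),M_{6}(n,\alpha)\}$ and the concluding inequality are exactly the paper's. However, the one structural step you commit to concretely is wrong, and it is inconsistent with your own list: in the ${\cal B}_{1}(n)$ case you assert that ``the connecting path collapses, so $\widehat{G^{*}}=B(3,1,3)$,'' yet $M_{5}(n,\alpha)$ has base $B(3,2,3)$. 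The collapse cannot be forced, and the obstruction is precisely the hypothesis $|V^{\prime}(G^{*})|=3$: the three pendant-free vertices fill an entire triangle of the $\infty$-graph, and any switch that shortens the connecting path (for instance, moving the triangle edges at $v_{1}$ over to $v_{2}$) either gives a vertex of $V^{\prime}$ a pendant neighbour or changes the number of pendant vertices, so the resulting graph leaves ${\cal B}(n,\alpha,\alpha-1,3)$ and maximality of $G^{*}$ yields no contradiction. This is exactly why the paper deduces only $\ell\le 2$ and $q=3$ and must retain both $M_{4}$ (with $\ell=1$) and $M_{5}$ (with $\ell=2$) as candidates, bounding each by Lemma \ref{juti}. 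Your $\theta$-graph case has a similar slip: ``forces $q=3$'' followed by ``$\widehat{G^{*}}=K_{4}-e$'' is incoherent, since $K_{4}-e=P(0,1,1)$, although the endpoint $M_{6}(n,\alpha)$ is the correct one.

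Beyond this, the enumeration you defer as ``the main obstacle'' is in fact the whole content of the paper's proof apart from Lemma \ref{juti}: the paper disposes of it by the same coordinate-comparison switchings as in Lemma \ref{largest in 1} (cycle lengths reduce to $3$, vertices outside $V_{c}(G^{*})$ have degree at most $2$, attached paths are short by Lemma \ref{grafting}), together with the pendant count, yielding $G^{*}\in\{M_{4},M_{5}\}$ in the ${\cal B}_{1}$ case and $G^{*}=M_{6}$ in the ${\cal B}_{2}$ case. So your plan names the right tools and lands on the right candidates, but as written it both leaves the decisive case analysis undone and, where it does make a definite claim about the base, makes one that would wrongly exclude $M_{5}$; a corrected write-up needs the $\ell\le2$ analysis, and, whenever a proposed switch exits the class ${\cal B}(n,\alpha,\alpha-1,3)$, an appeal to the previously established bounds (Theorem \ref{Bnaa}, Lemmas \ref{D1} and \ref{D2}) instead of to maximality.
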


\begin{proof}
Let $G^{*}$ be a graph with maximal spectral radius in ${\cal
B}(n,\alpha,\alpha-1,3)$. First suppose that $G^{*}$ is in ${ \cal
B}_{1}(n)$. Then $\widehat{G^{*}}=B(3,\ell,q)$. By using the similar
arguments as the proofs of  Lemma \ref{largest in 1} we may deduce
that $\ell\leq 2$ and $q=3$. Furthermore we have $G^{*}\in
\{M_{4}(n,\alpha),M_{5}(n,\alpha)\}$. Combining Lemma \ref{juti} we
have
 $$\rho(G)\leq\rho(G^{*})=\mbox{max}\{\rho(M_{4}(n,\alpha)),\rho(M_{5}(n,\alpha))\}<\rho(M(n,\alpha)).$$

Now suppose that $G^{*}$ is in ${ \cal B}_{2}(n)$. Then  we have
$G^{*}=M_{6}(n,\alpha)$. By  Lemma \ref{juti}
 we have
 $$\rho(G)\leq\rho(G^{*})=\rho(M_{6}(n,\alpha))<\rho(M(n,\alpha)).$$
 Thus we have $\rho(G)<\rho(M(n,\alpha))$ for any graph  $G$ in
${\cal B}(n,\alpha,\alpha-1,3)$.
\end{proof}

Combining the results of  Lemmas \ref{D1}, \ref{D2} and \ref{D3} we
have the following result.
\begin{theorem}
\label{Bnaa-1} Let $G$ be any  graph in
 ${\cal B}(n,\alpha,\alpha-1)$. Then
 $\rho(G)<\rho(M(n,\alpha))$.
\end{theorem}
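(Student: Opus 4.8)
The plan is to reduce immediately to the extremal graph and then run the case analysis already prepared by Lemmas \ref{description1}, \ref{D1}, \ref{D2} and \ref{D3}. First I would fix a graph $G^{*}$ attaining the maximal spectral radius in ${\cal B}(n,\alpha,\alpha-1)$; since $\rho(G)\le\rho(G^{*})$ for every $G$ in this class, it suffices to prove $\rho(G^{*})<\rho(M(n,\alpha))$. I would then recall the structural facts established in the discussion preceding Lemma \ref{description1}: one always has $1\le|V^{\prime}(G^{*})|\le 3$; when $|V^{\prime}(G^{*})|=2$ the two vertices of $V^{\prime}(G^{*})$ are adjacent; and when $|V^{\prime}(G^{*})|=3$ they lie on a common triangle. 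These three possibilities for $|V^{\prime}(G^{*})|$ form the skeleton of the argument.

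Next I would split according to $|V^{\prime}(G^{*})|$. If $|V^{\prime}(G^{*})|=1$, Lemma \ref{description1} directly yields $\rho(G^{*})<\rho(M(n,\alpha))$. If $|V^{\prime}(G^{*})|=2$, I would distinguish whether or not the two vertices of $V^{\prime}(G^{*})$ lie on a triangle: if they do not, Lemma \ref{D1} gives the bound; if they do, then $G^{*}\in{\cal B}(n,\alpha,\alpha-1,2)$ and Lemma \ref{D2} gives the bound. Finally, if $|V^{\prime}(G^{*})|=3$, then $G^{*}\in{\cal B}(n,\alpha,\alpha-1,3)$ and Lemma \ref{D3} applies. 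In every case $\rho(G)\le\rho(G^{*})<\rho(M(n,\alpha))$, which is exactly the assertion.

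The only point requiring care is verifying that these four subcases are genuinely exhaustive and that each is compatible with the hypotheses of the cited lemma — in particular that Lemma \ref{description1} disposes of the case $|V^{\prime}(G^{*})|=1$ without circularity (it does, since in that case its conclusion is precisely the desired inequality), and that the structural bound $|V^{\prime}(G^{*})|\le 3$ together with the incidence/triangle facts leaves no configuration uncovered between the non-triangle subcase of $|V^{\prime}(G^{*})|=2$, the family ${\cal B}(n,\alpha,\alpha-1,2)$, and the family ${\cal B}(n,\alpha,\alpha-1,3)$. There is no genuinely hard step remaining at this stage: the analytic content was already carried out in Lemmas \ref{juti2}, \ref{juti3} and \ref{juti}, where the relevant characteristic polynomials are compared with $f(x)$, and the graph-transformation work was done in Lemmas \ref{largest in 1}, \ref{largest in 2}, \ref{D1}, \ref{D2} and \ref{D3}; the present theorem is simply the bookkeeping that assembles them.
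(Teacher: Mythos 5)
Your proposal is correct and follows essentially the same route as the paper: the paper's proof is just the one-line observation that Lemmas \ref{D1}, \ref{D2} and \ref{D3} combine to give the bound, with the case $|V^{\prime}(G)|=1$ implicitly handled through the extremal-graph argument of Lemma \ref{description1} exactly as you spell out. If anything, your write-up is slightly more careful than the paper, which omits explicit mention of Lemma \ref{description1} in its ``combining'' statement.
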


\end{document}